\newcommand{\norm}[1]{\left|\left|{#1}\right|\right|}
\newcommand{\I}{\mathcal{I}}
\newcommand{\T}{\mathcal{T}}
\newcommand*{\QEDA}{\hfill\ensuremath{\blacksquare}}%
\newtheorem{lemma}{Lemma}
\newtheorem{proposition}{Proposition}
\newtheorem{proof}{Proof}
\title{3-D Dynamic UAV Base Station Location Problem}
\author{
  Cihan~Tugrul~Cicek \\
  Department of Industrial Engineering \\
  Atilim University \\
  Ankara, Turkey \\
  \texttt{cihan.cicek@atilim.edu.tr} \\
  \And
  Zuo-Jun~Max~Shen \\
  Department of Industrial Engineering and Operations Research \\
  University of California, Berkeley\\
  Berkeley, CA, USA \\
  \texttt{maxshen@berkeley.edu} \\
  \And
  Hakan~Gultekin \\
  Department of Mechanical and Industrial Engineering \\
  Sultan Qaboos University\\
  Muscat, Oman \\
  \texttt{hgultekin@squ.edu.om} \\
 \And
  Bulent~Tavli \\
  Department of Electrical and Electronics Engineering\\
  TOBB University of Economics and Technology\\
  Ankara, Turkey \\
  \texttt{btavli@etu.edu.tr} \\
}
\begin{document}
\maketitle

\begin{abstract}
We address a dynamic covering location problem of an Unmanned Aerial Vehicle Base Station (UAV-BS), where the location sequence of a single UAV-BS in a wireless communication network is determined to satisfy data demand arising from ground users. This problem is especially relevant in the context of smart grid and disaster relief. The vertical movement ability of the UAV-BS and non-convex covering functions in wireless communication restrict utilizing classical planar covering location approaches. Therefore, we develop new formulations to this emerging problem for a finite time horizon to maximize the total coverage. In particular, we develop a mixed-integer non-linear programming formulation which is non-convex in nature, and propose a Lagrangean Decomposition Algorithm (LDA) to solve this formulation. Due to high complexity of the problem, the LDA is still unable to find good local solutions to large-scale problems. Therefore, we develop a Continuum Approximation (CA) model and show that CA would be a promising approach in terms of both computational time and solution accuracy. Our numerical study also shows that the CA model can be a remedy to build efficient initial solutions for exact solution algorithms.
\end{abstract}

\keywords{uav base station \and dynamic location \and lagrangean decomposition \and continuum approximation \and non-linear optimization}

\section{Introduction}
Unmanned aerial vehicles (UAVs) as aerial base stations are a promising technology to provide wireless services to ground users. For example, UAV Base Stations (UAV-BSs) can provide emergency communication services to rescue teams after a disaster or can help improving wireless capacity in congested networks \cite{Bor-Yaliniz2016}. Among all other opportunities, rapid deployment and mobility advantages of UAV-BSs enforce players in the telecommunication sector to involve aerial components (e.g., drones or balloons) besides existing terrestrial components (e.g., macro-cell base stations) in their operations. In this way, the quality of service and the quality of experience can be improved. quality of service typically includes performance metrics on how service providers monitor the service provision such as the cost of service or coverage. quality of experience, on the other hand, includes metrics on how users perceive the services such as reliability or price. \cite{Cao2018}.

A key challenge to involve UAV-BSs in the design of wireless networks is to determine UAV-BS locations since the demand is instantaneous and highly dynamic in a territory over which the UAV-BS is planned to serve. Recent works have studied the single and multiple UAV-BS location for the static networks, where the demand and location that demand is originated from are fixed and known. However, compared with static networks, the UAV-BS location problem for dynamic networks is more challenging when the demand and locations are random. Although there have been some attempts to solve this challenging dynamic problem, much uncertainty still exists about how to incorporate different behaviors of ground users into the problem such as changing locations and different service level requests at different time epochs. 

To solve this problem, we introduce a new 3-D Maximal Covering Location Problem (3MCLP) to model a wireless communication network, in which a single UAV-BS is used to maximize coverage in a finite service area by offering wireless services to users who have dynamic demand within a finite time horizon. Although the coverage functions have typically been assumed to be convex and dependent solely on the distance in the classical covering location problems, the coverage function in wireless networks considers both the distance and the angle between a particular user and the UAV-BS. Therefore, classical covering location approaches cannot help to solve this new problem. 

In this study, we deliberately address the 3MCLP of the UAV-BS in a dynamic environment where both the demand and locations of users change in time. Our objective is to determine the optimal location sequence of the UAV-BS for a limited time horizon to maximize the overall coverage while satisfying user demand. In particular, we first develop a non-convex Mixed Integer Non-Linear Programming (MINLP) formulation for this dynamic problem and propose two algorithms to solve this problem. First, we develop a Lagrangean Decomposition Algorithm (LDA), where we can decompose the original problem into relatively easier to solve smaller problems after applying Lagrangean Relaxation (LR) technique with additional convex approximations of some non-convex functions. Second, we develop a Continuum Approximation (CA) model to reformulate the problem and propose a CA algorithm to solve this new model. We test the two algorithms through a computational study with synthetic data.

The main contributions of our study can be summarized as follows:
\begin{enumerate}
    \item A new 3-D covering location problem is introduced, where the vertical dimension and the temporal change in demand is involved.
    \item A non-convex coverage function is proposed, where the coverage level does not solely depend on the distance to the UAV-BS, but is a function of both the distance and the angle between the users and the UAV-BS.
    \item A new MINLP formulation is developed, and its characteristics are analyzed.
    \item Two new algorithms are developed as a remedy to overcome the computational burden of the discrete formulation, which have improved the computational efficiency.
\end{enumerate}

The rest of the paper is organized as follows: A comprehensive literature review is given in Section~\ref{sec:literature}. The system model is given in Section~\ref{sec:model}, and a discrete programming formulation of the problem is developed in Section~\ref{sec:discrete} together with the LDA to solve this problem. Section~\ref{sec:ca} provides the details of the CA approach. We present computational results of the proposed solution algorithms in Section~\ref{sec:results}, and conclude the study in Section~\ref{sec:conclusion}.

\section{Literature review}\label{sec:literature}
The promising research including UAV-BSs (such as enhancing network capacity, improving Quality-of-Experience, extending coverage) have been attracting significant interest despite the relatively new appearance of the topic. Especially, the location optimization problems have attracted significant interest since they have a crucial impact on wireless network performance \cite{Cicek2019a}. In this section, we present a high-level overview of the UAV-BS literature with a special focus on location problems together with its reflection in the Operations Research literature and present the differences of our study from the existing literature.

UAV-assisted wireless networks are started to be investigated with the launch of emerging technologies in the design of heterogeneous telecommunication networks where both terrestrial and aerial components can be jointly utilized \cite{Cao2018}. Among all others, one of the challenges in such networks is to optimally determine the UAV-BS locations not only to improve the communication channel performance between the UAV-BSs and users, but also to maintain reliable backhaul connection with the satellite or the terrestrial networks \cite{Li2019}. \cite{Mozaffari2016}, \cite{Merwaday2016}, \cite{Sharma2016}, \cite{Lyu2017}, \cite{Alzenad2017}, and \cite{Bor-Yaliniz2019} have studied static uncapacitated UAV-BS location problem with different objectives such as coverage, spectral efficiency, i.e., the unit data transmission rate per unit bandwidth, or latency, i.e., the average delay time observed when responding a data request from a user. 

The literature on UAV-enabled communications is rich and encompasses many aspects of communications~\cite{Zeng2016,Zeng2019a}. In~\cite{Xu2018}, trajectory optimization of a UAV which provides wireless energy transfer to a set of energy receivers is investigated. In~\cite{Zhan2018}, the use of UAVs to collect data from a Wireless Sensor Network (WSN) is investigated where the trajectory of the UAV and the wake-up schedule of the sensor nodes are optimized. In~\cite{Zhang2019a}, cellular enabled UAV communications paradigm is investigated. The objective is to minimize the mission completion of a UAV, which is served by terrestrial BSs, by optimizing its trajectory. In~\cite{Zhang2019b}, physical layer security aspects of UAV-enabled communications networks are addressed to maximize the average secrecy rates of the UAV-to-ground and ground-to-UAV transmissions by joint optimization of the UAV trajectory and transmit power. In~\cite{Han2020}, an approach for high reliability and low latency communication of UAV swarms is proposed which exploits both cellular communications and device-to-device communications.

In multiple UAV systems, UAVs need to communicate among themselves and such communications can be in any direction of the 3-D space. As such, many antenna positions and orientations are possible. In fact, it is shown in~\cite{Badi2019} that UAVs themselves can act as local scatterers which results in elevated channel depolarization and decreased cross-polarization discrimination (XPD). Furthermore, experimental analysis reveals that relative direction of UAVs has a significant impact on XPD values, 3-D link performance is critically affected by the elevation angle, and spectral efficiency can be more than doubled by cross-polarized antenna configurations.

To improve the overall service quality and cover more realistic cases, \cite{Kalantari2017a}, \cite{Kalantari2017b}, \cite{Kalantari2017c}, \cite{Shi2018}, and \cite{Cicek2019b} revisited the problem by incorporating the capacity of the network, e.g. maximum data rate that can be offered to the users, into the problem setup and developed new problem structures and solution approaches. However, none of these studies consider the temporal change in the parameters and mainly focused on a specific time epoch or a snapshot of the network.

While there is an extensive literature investigating UAV-BS location models in the static setup, there has been surprisingly little work investigating how dynamic problem setup where both UAV-BSs and users are allowed to move within the planning horizon can change the network performance. \cite{Fotouhi2016} solve a single UAV-BS location problem to improve the spectral efficiency for a finite time horizon with three different heuristic algorithms, where the UAV-BS altitude and velocity are assumed to be fixed. \cite{Mozaffari2017} study the energy aspect of the UAV-BSs. The authors propose three heuristic algorithms to find the locations of multiple UAV-BSs to minimize the total transmit power for serving multiple Internet-of-Things devices located on the ground. Again, the UAV altitudes are assumed to be fixed. 

Propulsion energy dissipation of UAVs as a function of speed has been investigated in detail in the literature. In fact, it is shown that minimum energy dissipation is achieved at a certain non-zero speed (i.e., not hovering)~\cite{Phung2013,DiFranco2015,Karydis2017,Zeng2017,Zeng2019b,Ebrahimi2020}. In~\cite{Zeng2017,Zeng2019b}, UAV trajectory optimization frameworks by considering both communications objectives and energy dissipation minimization (by utilizing the aforementioned relationship between propulsion energy dissipation and speed) are proposed and significant performance improvements are reported.

\cite{Liu2018} study the single multiple UAV-BS location problem to design an energy-efficient wireless network, in which the maximum service area of the UAV-BSs are assumed to be known and fixed, and propose a deep reinforcement learning algorithm. Recently, \cite{WangZ2019a} study an adaptive UAV location problem where the altitude of the UAV-BS is assumed to be fixed and the objective is to maximize the average throughput provided to users. An adaptive scheme is proposed in which ground users are divided into two or four identical sectors, and a single sector is selected to locate the UAV-BS at each time epoch by maximizing the coverage probability. However, none of these studies consider: (i) varying user demand and (ii) moving ability of the UAV-BSs in the vertical dimension.  

There exist more studies related to the routing decisions of single or multiple UAVs to optimize different objectives in a dynamic setup such as data package delivery. However, such studies require a different understanding of the networks with a focus on the hovering ability of the UAVs like the velocity and/or the order of which users or ground terminals are served. Therefore, those studies are typically treated as routing problems, which are fundamentally different from covering problems, thus, these studies are out-of-scope of our study, and are not included in our literature review. A broad review of the technologies used in UAV-assisted networks and various applications of UAV-BSs in telecommunication can be found in the reviews of \cite{Hayat2016}, \cite{Cao2018}, and \cite{Li2019}. 

Although the research on UAV-BS location is fairly recent, covering location problems have been studied for decades in Operations Research community. After the seminal work of \cite{Church1974} in which the locations of a fixed number of public facilities are determined to cover as much population as possible, MCLP has become an essential component for a wide range of applications such as disaster management, fire protection, public services, and telecommunications \cite{Brotcorne2003, Gentili2012, Tu2016, Cordeau2019, Chauhan2019}. Readers are referred to \cite{Farahani2012} and \cite{Murray2016} for general MCLP problems and to \cite{Berman2010} for extensions of MCLP. 
Since the goal of almost all of the existing studies on MCLP are mainly based on determining the locations of buildings, such as warehouses or plants, the utilized models are typically limited to $\mathbb{R}^2$. However, it is envisioned that UAVs will play a crucial role in transforming several industries to technology-driven operations \cite{Gupta2016} and it would not be surprising that widespread adoption of UAVs will occur progressively in many more applications and industries to improve efficiency and operability. 

Aligned with this prediction, some studies have considered using UAVs as an emerging tool to improve the system performance in different industries. Humanitarian logistics has been one of the highly popular areas where UAVs are used as complementary tools to extend and leverage disaster relief operations \cite{Erdelj2017,Malandrino2019}. Last-mile delivery and emergency response planning have been some other areas where UAVs are utilized due to their cost advantage and mobility \cite{Murray2015, Haidari2016, Pulver2016, Kim2017}. A recent survey on different civil applications of UAVs can be found in \cite{Alena2018}.

A significant drawback of the current studies related to UAV location is to ignore the flexibility of the UAVs to dynamically change their position both in horizontal and vertical directions. However, integrating this ability into the problem formulation increases the complexity substantially. Therefore, two common approaches are adopted: (i) fixing the UAV altitudes or (ii) discarding the decision variables related to the vertical dimension by some simplifying assumptions, which yield sub-optimal decisions. 

Apart from simplifying a challenging task by introducing different relaxations or assumptions on the vertical dimension, in most of the studies, mathematical formulations have adopted discrete programming techniques. Thus, even moderately sized problems easily become intractable. In this study, we combine the flexibility of positioning the UAV-BSs in the vertical dimension and a more realistic non-convex coverage function to develop a mathematical programming formulation and propose two solution algorithms to mitigate sub-optimal decisions.

\section{System Model}\label{sec:model}

We consider a UAV-assisted wireless communication network where a single UAV-BS is used to provide wireless services to multiple users within a finite horizon. It is assumed that locations and demand of users are known a priori and the UAV-BS is fed with this information to design its trajectory before-hand. Such an approach is adopted in different studies with different objectives such as maximizing average information rate \cite{Jiang2019}, minimizing energy consumption \cite{WangZ2019b}, and maximizing secrecy \cite{Zhang2019b}.

The UAV-BS is assumed to have an infinite capacity due to dedicated backhaul links to ground base stations. That is, the demand can be infinitely satisfied without regarding the capacity of the UAV-BS, since any capacity requirement can be instantly fulfilled by allocating more bandwidth from the ground base stations \cite{Alzenad2017,Ghanavi2018}. Since backhaul requires wireless links, interference between UAV-BS-to-user (fronthaul) and backhaul links should be considered. Similar to interference avoiding methods studied in the literature \cite{Qiu2020,Mozaffari2015}, we assume that different frequency bands are employed in fronthaul and backhaul links. Moreover, the time-division duplexing (TDD) is assumed, where equally allocated time slots are adopted. In such a scheme, the UAV-BS is assumed to receive a signal from the users in the first time slot and forward signals to ground base stations in the next slot.

Relaxing the backhaul capacity limitation and interference would decrease the complexity of the problem, however, thoroughly analyzing this simpler setup will provide powerful insights for designing more complex structures. Thus we keep the capacitated network design as a future research, and exclude ground base stations from the model.

\subsection{Proposed network structure}

Let $S \subseteq \mathbb{R}^2$ and $Q \subseteq \mathbb{R}^3$ be the convex and bounded regions within which demand arises and the UAV-BS flies, respectively. We use $\I=\{1,\ldots ,n\}$ and $\T=\{1,\ldots ,T\}$ to denote demand nodes and time intervals, respectively. $n$ users are assumed to move inside $S$ during the entire time horizon as well as no additional users are allowed to enter. Similarly, the UAV-BS is assumed to serve inside $Q$ for all intervals. 

The UAV-BS is assumed to serve for a limited time horizon due to battery concerns and can move at specific time epochs to respond to the changing environment. Note that the UAV-BS is assumed to be capable of performing successful flying operations independent of the duration and number of intervals. However, we incorporate a penalty factor to our model to prevent aggressive movements of UAV, which is explained in the sequel.

The time epochs are assumed to be fixed, so that the entire time horizon can be divided into fixed intervals. Each user is associated with a non-negative weight, $w_i\in[0,1]$, and assumed to be covered in a specific time interval if and only if interval-specific Maximum Signal Loss Threshold (MSLT) of the user, $d_{it} \in \mathbb{R}$, is not exceeded. The details of signal loss is given in the next section. This loss is similar to the spatial distance constraint of the classical MCLP, where a facility is assumed to cover a customer unless the distance is greater than a maximum threshold value.

We assume that the coverage level of a user depends solely on the signal quality provided by the UAV-BS. This quality may depend on several other factors such as transmit power at the transmitter and the obstructions on or near the signal path. However, we assume that the signal power at the UAV-BS is fixed and all other factors can be ignored. In particular, the signal quality can be measured by a loss function, $L:Q\times S \rightarrow \mathbb{R}$, that depends on the locations of the users and UAV-BS (see Section~\ref{sec:slfunction} for a detailed explanation). 

Let $y_{it} \in S$ be the location of user $i \in \I$ in interval $t \in \T$. The UAV-BS will be located at $x_t \in Q$ in interval $t\in\T$ and cover a set of users in its covering area $C_t(x_t) \subseteq \I$. A user is fully covered in a particular interval if loss level in that interval is below a certain value and the coverage level gradually decreases until loss level exceeds MSLT of the corresponding user. Thus, $C_t$ can be defined as $C_t(x_t) \coloneqq \{i: L(x_t,y_{it}) \leq d_{it}\}$. Throughout this paper, unless otherwise specified, we use boldface capital letters to denote matrices and lower-case letters to denote vectors consisting of scalar parameters or variables denoted by the same letter, e.g., $\mathbf{w}$ is the vector whose components are $w_i$ for $i \in \I$, and $\mathbf{Y}$ is the matrix whose components are $y_{it}$ for $i \in \I$ and $t \in \T$. 

An illustration of the described wireless network is given in Fig.~\ref{fig1}. Users may move within $S$ between time intervals. The discs depict the covering area in an interval with cross symbols depicting the projection of the UAV-BS onto the ground. We assume that user MSLTs are independent of each other in each interval. A user is covered in a specific interval if the signal loss is below the corresponding MSLT value of that interval. Hence, users who are covered in an interval may not be covered in the next interval or vice-versa. Users with blue and red color in Fig.~\ref{fig1} depict the users who are covered and not covered in an interval, respectively. The change in the coverage in different time intervals can be observed in this figure. Non-coverage can arise in two cases: (i) relocation of a user or the UAV-BS can degrade the signal and the new signal quality may not satisfy user demand anymore or (ii) user demand (MSLT value) can change and the loss level may exceed the new value.

\begin{figure}[!t]
 \centering
 \vspace{-0.1in}
    \begin{tabular}{c}
    \subfloat[$t=t_1$.]
    {\resizebox{0.6\textwidth}{!}{\includegraphics[scale=0.34]{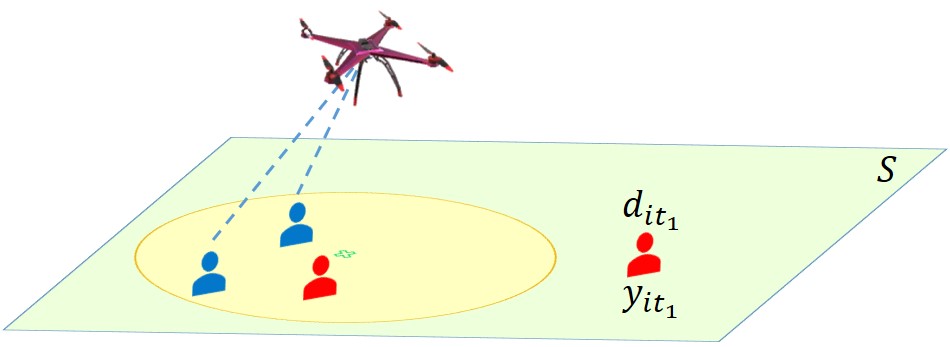}\label{fig1a}}} \\
    \subfloat[$t=t_2$.]
    {\resizebox{0.6\textwidth}{!}{\includegraphics[scale=0.34]{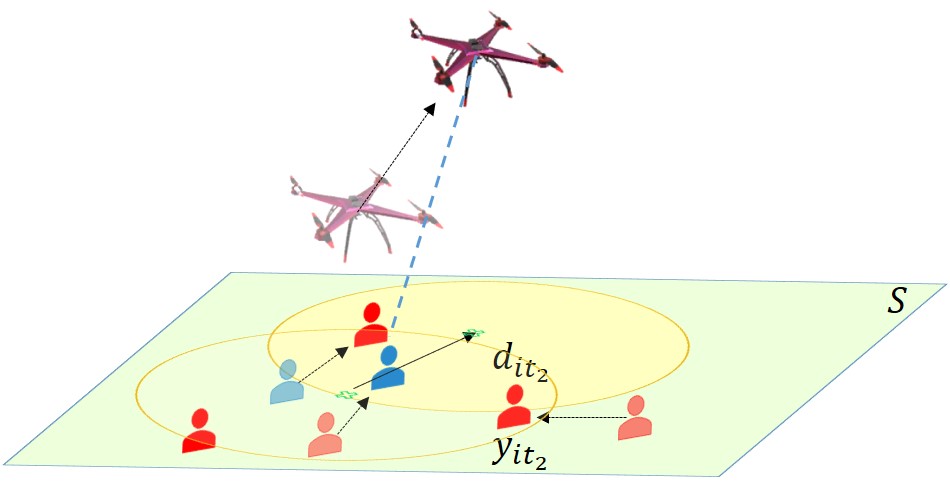}\label{fig1b}}}
    \end{tabular}
    \caption{Illustration of network for different time intervals.}
    \label{fig1}
\end{figure}

We made several simplifying assumptions in our model. However, arguably, the most important communication theoretic aspect of UAV-BSs is the path loss function itself when compared to many other applications of the communication theory. We adopt the path loss model for UAV-assisted networks proposed by \cite{Al-Hourani2014}, which has been heavily cited in the literature. Indeed, avoiding extremely complicated models by focusing on the most important aspects of UAV-BS communications is important for extracting invaluable insights which otherwise would be extremely challenging (if not impossible) to unearth.

\subsection{Service function}\label{sec:slfunction}

UAV-assisted wireless communication networks have several unique characteristics when compared to existing terrestrial communication networks. One of the most significant differences is based on the signal loss, namely Path Loss (PL), between transmitters and receivers, which are the UAV-BS and users, respectively, in our context. The higher the PL is the higher the decrease of transmitted signal power. It is not possible to successfully decode the transmitted signal if the received signal power is lower than a threshold. It means that the received power level should not be much lower than the noise level. Therefore, higher transmit power should be utilized to compensate for the higher PL levels. In the classical terrestrial networks, base stations are equipped with transmitters that can provide much higher transmission power levels to compensate for high PLs. However, mobile vehicles like UAVs have less space and capacity to handle such high power transmission equipment. Therefore, managing PL in a UAV-assisted network is more important.

There exist several PL models in the literature to define the loss level in UAV-assisted networks \cite{Khawaja2018}. We adopt the model proposed by \cite{Al-Hourani2014}, which has been the most cited model to date. In this model, users are assumed to be divided into two groups, where the first group has the probability of having Line-of-Sight (LoS) connections with low PL, while the second group does not have LoS but still can maintain a relatively poor connection with high PL.  

Let $r(x,y)=\norm{K(x-y)}$, $H(x,y)=\norm{V(x-y)}$ and $\theta(x,y)=(180 \slash \pi)\arctan \left( H(x,y) \slash r(x,y)\right)$ be the horizontal and vertical distances, and the elevation angle between a UAV-BS located at $x \in Q$ and a user at $y \in S$, respectively. Here $\norm{\cdot}$ is the Euclidean norm, $K$ and $V$ are linear transformations defined to transform 3-D location vectors to horizontal ($\mathbb{R}^2$) and vertical ($\mathbb{R}$) location vectors, respectively. Then, the signal loss between the UAV-BS at $x\in Q$ and a user at $y\in S$ is defined as the following non-convex function,
\begin{equation}
    L(x,y) = F + 10\eta\log_{10}\left( \norm{x-y}\right) +\frac{B}{1+\alpha e^{-\beta(\theta(x,y)-\alpha)}},
\label{eqn:pl}
\end{equation}

\noindent where $F=10\eta\log_{10}\left(\frac{4\pi f}{c}\right)+\phi_{\text{NLoS}}$ and $B=\phi_{\text{LoS}}-\phi_{\text{NLoS}}$ are constant loss parameters with channel frequency $f$ in Hz, speed of light $c$ in $\text{m}\slash\text{s}$. The parameters $\eta$, $\alpha$, $\beta$, $\phi_{\text{LoS}}$ and $\phi_{\text{NLoS}}$ depend on the environment, which can be a suburban, urban, dense urban or high-rise urban. The UAV-BSs typically have a finite service area, thus $\norm{x-y}$ and $\theta(x,y)$ terms are also finite, while there always exists a constant loss value due to $F$ in this function. Therefore, we can assume that $L(\cdot)$ is bounded (i.e., $L \in [L^-$,$L^+])$. 

Note that we do not include any term to model the effects of small-scale fading in Eq.~\ref{eqn:pl}, instead, we focus on large-scale path-loss. Thus, the path-loss function in Eq.~\ref{eqn:pl} can impact our analysis to some extent especially for NLOS channels because fading is often more severe in NLOS channels. Nevertheless, it is possible to mitigate the effects of small-scale fading in UAV-enabled communications systems (e.g., by utilizing an antenna array it is possible to harness the diversity of the wireless channels and achieve multiplexing gain)~\cite{Khoshkholgh2019}.

Our model uses the maximum signal loss threshold as the demand of a specific user. Such an approach can be easily extended to different demand functions such as throughput. When a user requests a certain data rate in an interval, this rate can be converted to a maximum signal loss value if the transmit power of the UAV-BS and allocated bandwidth amount are fixed \cite{Cicek2019b}. As we have similar assumptions in our model, we use the signal loss function as the demand function.

$L(.)$ is a monotonically increasing function in $r$ and unimodal in $H$. In other words, given the altitude of UAV-BS, the signal loss monotonically increases as horizontal distance increases, whereas given the horizontal distance, the loss monotonically increases up to a certain point as the altitude increases, then monotonically decreases \cite{Al-Hourani2014}. We utilize these properties, especially, to define the service area and develop solution algorithms for the CA model.  

\section{Discrete formulation}\label{sec:discrete}
In this section, we give an MINLP formulation of the described wireless communication network and develop an LDA to solve this formulation. UAVs are fast and relatively practical to use, however, there is an inherent limitation to their use in terms of battery life. Recall that we assume that the UAV-BS would have sufficient battery installed and capable of performing any flying operation within the planning horizon. Nevertheless, service time is assumed to remain limited, and we deliberately enforce the UAV-BS to avoid aggressive displacement as much as possible.

The battery of a UAV-BS is typically used for maintaining two operations simultaneously: (i) to carry the UAV and maintain a stable position in the air and (ii) to transmit the signal. Therefore, there exists a trade-off between extending the service time by hovering longer times and expanding the service area by allocating more power to transmit the signal. A promising way of managing this trade-off is to define a weighting cost parameter, $p \in [0,1]$, that allows the decision-maker to observe how different strategies such as penalizing the movement or relaxing it affects the coverage. We present a sensitivity analysis for this parameter in Section~\ref{sec:results}.

Let $\mathbf{X}=(x_1, \ldots , x_T) \in Q^T$ and $g(\mathbf{X}):Q^T \rightarrow \mathbb{R}$ be a function that defines the total relocation measurement of a location sequence of the UAV-BS. A typical definition for $g(\cdot)$ given an initial and terminating location, $x_0$ and $x_{T+1}$, is the total movement during the entire time horizon, i.e.,
\begin{equation}
    g(\mathbf{X}) \coloneqq \sum\limits_{t=1}^{T+1}\norm{x_t-x_{t-1}}.
\label{eqn:reloc}
\end{equation}

Note that there might be different definitions for $g(\cdot)$ such as the average velocity or average movement. Nevertheless, different measures have no effect on our formulation. Stability of the UAV-BS is regularized by $p\in[0,1]$. Greater $p$ values enforce the UAV-BS to keep its location stable during the entire time horizon, while smaller $p$ values allow the UAV-BS to move around. We assume that the power required during take-off and landing is negligible, or can be managed explicitly, thus, there is no cost at the beginning and end of the service time. Note that the problem can be solved independently for each interval when $p=0$.

We introduce $\mu:Q\times S \times \mathbb{R} \rightarrow [0,1]$ to define the coverage level between a UAV-BS located at $x\in Q$ and a user at $y\in S$ with MSLT $d>L^-$ as
\begin{equation}
\mu(x,y,d) \coloneqq \max\left\{0,\frac{d-L(x,y)}{d-L^-}\right\}. \label{eqn:coverage}   
\end{equation}

\noindent Observe that $\mu$ takes positive values only if the signal loss is below the given MSLT value, $d$, and gradually increases to 1 with decreasing loss values. Figure~\ref{fig2} depicts the change in $\mu$ with respect to two arbitrary MSLT values, $d_1$ and $d_2$.

\begin{figure}[!t]
    \centering
    \resizebox{.6\textwidth}{!}{\includegraphics{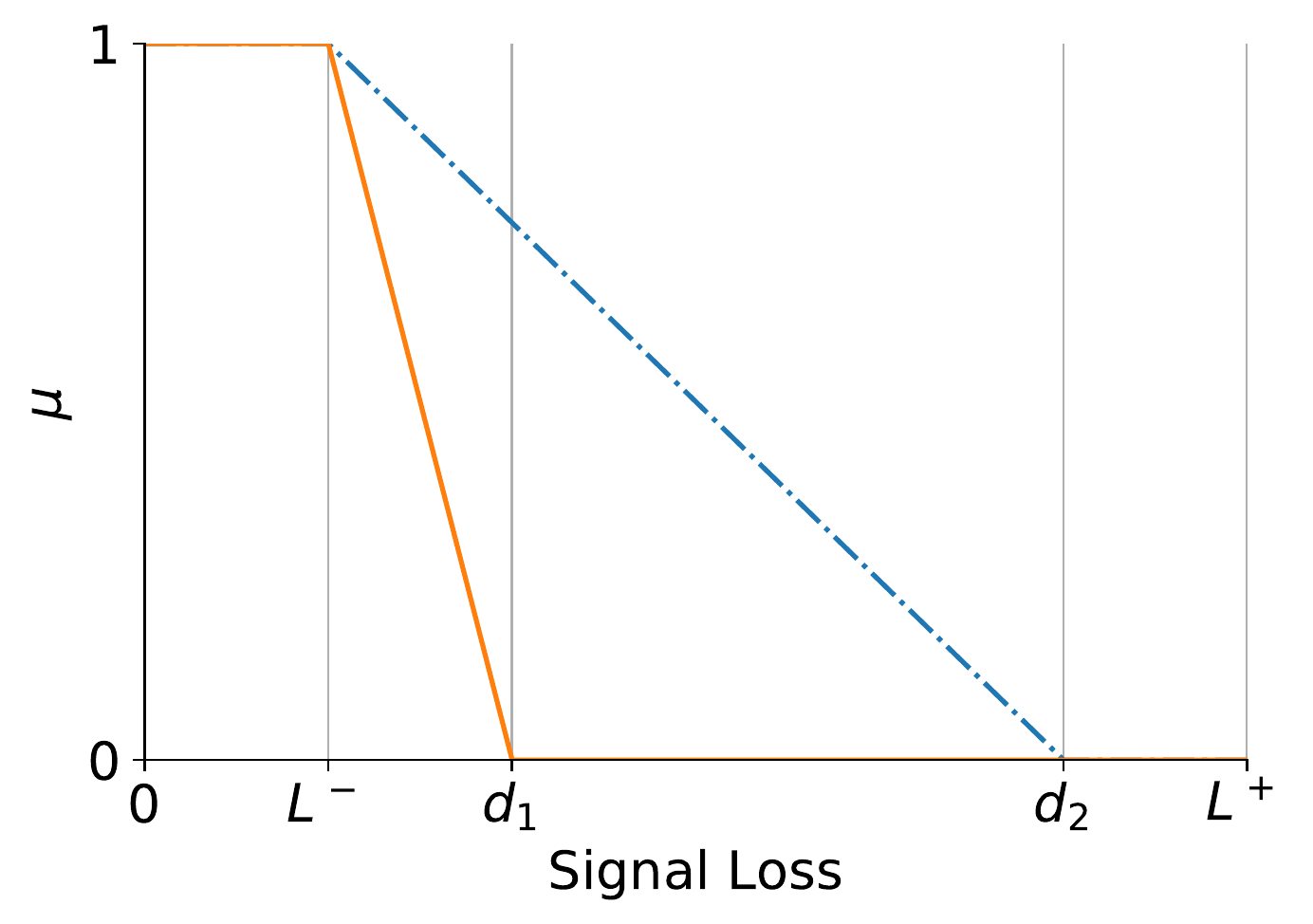}}
    \caption{Change of coverage level with respect to signal loss.}
    \label{fig2}
\end{figure}

Given the sets of user locations $\mathbf{Y_t} = (y_{1t}, \ldots , y_{nt}) \subset S^n$ and related MSTL values per time intervals $\mathbf{D_t} = (d_{1t}, \ldots , d_{nt}) \subset \mathbb{R}^n$, the discrete 3MCLP $(\text{3MCLP}_\text{D})$ can be formulated as follows:
\begin{flalign}
(\text{3MCLP}_\text{D}): &  \max\limits_{\mathbf{X}\in Q^T} 
\Omega_\text{D}(\mathbf{X}) = -p g(\mathbf{X}) + \sum\limits_{t\in\T} \sum\limits_{i\in C_t(x_t)} w_i\mu(x_t,y_{it},d_{it}).
\label{eqn:model1}
\end{flalign}

Note that the set of covered users, $C(x_t)$, can also be stated in a different way by introducing new binary variables, $z_{it}$, that denote whether or not user $i$ is covered in interval $t$. Having added these new binary variables and placing \eqref{eqn:coverage} into \eqref{eqn:model1}, we have
\begin{flalign}
    \nonumber (\text{3MCLP}_\text{D}): & \max\limits_{\mathbf{X}\in Q^T, \mathbf{Z} \in \{0,1\}^{n \times T}} \Omega_D(\mathbf{X},\mathbf{Z}) = -pg(\mathbf{X}) + \sum\limits_{i=1}^n w_i \sum\limits_{t=1}^T \frac{d_{it}-L(x_t,y_{it})}{d_{it}-L^-}z_{it} \\
    \nonumber & \phantom{.......................} = -pg(\mathbf{X}) + \sum\limits_{t=1}^T\sum\limits_{i=1}^n \left(\nu_{it}z_{it} - \kappa_{it}z_{it}L(x_t,y_{it})\right) \\
    \nonumber & \text{   s.t.} \\
    & L(x_t,y_{it})-d_{it} \leq M(1-z_{it}),\phantom{...}i\in \I, t\in \T \label{eqn:cons1} \\
    & z_{it} \in \{0,1\},\phantom{...} i \in \I, t\in\T,  \label{eqn:cons2}
\end{flalign}

\noindent where $\nu_{it}={w_id_{it}}\slash ({d_{it}-L^-})$, $\kappa_{it}=1\slash ({d_{it}-L^-})$ are auxiliary parameters, and $M$ is a sufficiently large non-negative number (e.g., $M=L^+-L^-$). Constraints~\eqref{eqn:cons1} enforce $z$ variables to be 0 when the MSLT of a user is exceeded, i.e., $d_{it}-L(x_t,y_{it})<0$. When $d_{it}-L(x_t,y_{it})\geq 0$, $z$ variables are not restricted and set to 1 since the objective is maximization. Constraints~\eqref{eqn:cons2} are binary restrictions for $z$ variables.  

Note that the objective function in $(\text{3MCLP}_\text{D})$ includes multiplication of decision variables $z$ and $L$. Therefore, we introduce new auxiliary variables, $s_{it}$, to overcome this non-linearity. As a result, the $(\text{3MCLP}_\text{D})$ can be stated as,
\begin{flalign}
    \nonumber (\text{3MCLP}_\text{D}): & \max\limits_{\mathbf{X}\in Q^T,\mathbf{Z} \in \{0,1\}^{n \times T},\mathbf{S} \in \mathbb{R}^{n \times T}} \Omega_D(\mathbf{X},\mathbf{Z},\mathbf{S})  = -pg(\mathbf{X}) + \sum\limits_{t=1}^T\sum\limits_{i=1}^n \left(\nu_{it}z_{it} - \kappa_{it}s_{it}\right) \\
    & \nonumber \text{s.t.} \\
    & \nonumber \text{Constraints~\eqref{eqn:cons1}-\eqref{eqn:cons2}} \\
    & s_{it} \leq Mz_{it},i\in\I,t\in\T \label{eqn:cons3}\\
    & s_{it} \leq L(x_t,y_{it}), i\in\I,t\in\T \label{eqn:cons4}\\
    & L(x_t,y_{it})-M(1-z_{it}) \leq s_{it} , i\in\I,t\in\T \label{eqn:cons5}\\
    & s_{it} \geq 0 , i\in\I,t\in\T. \label{eqn:cons6}
\end{flalign}

In the above formulation, when a $z$ variable is set to 0, Constraints~\eqref{eqn:cons3} and \eqref{eqn:cons6} become tight and the corresponding $s$ variable becomes 0, which yields no coverage in the objective function value. On the other hand, when a $z$ variable is set to 1, Constraints~\eqref{eqn:cons4} and \eqref{eqn:cons5} become tight and the corresponding $s$ variable becomes equal to $L$, which increases the objective function value.

\subsection{Lagrangian decomposition algorithm}
$(\text{3MCLP}_\text{D})$ can be solved by using commercial nonlinear programming solvers like BARON, but generally such an approach takes an excessively long time even for moderately sized problems. This fact motivates the development of an LDA, which is a Lagrange heuristic based on Lagrangian relaxation of nonlinear constraints, and on simplifying non-convex functions by convex approximations. LDA has been widely used within the scope of different MINLP applications such as energy storage, portfolio management, and water network designs. A detailed source of decomposition approaches in MINLP can be found in \cite{Novak2005} and the references therein. 

The LDA is shown to be efficient when a non-convex MINLP has a block-seperable structure \cite{Novak2005}, which is the case in our problem after applying a Lagrangian relaxation to $\mathrm{3MCLP}_\mathrm{D}$. Relaxing all non-convex constraints~\eqref{eqn:cons1},\eqref{eqn:cons4}, and \eqref{eqn:cons5} with non-negative multipliers $\lambda$, $\vartheta$, and $\delta$ yields the following relaxed formulation:
\begin{flalign}
    \nonumber (\text{3MCLP}_{\text{LR}}): & \max\limits_{\mathbf{X},\mathbf{Z},\mathbf{S},\mathbf{\lambda},\mathbf{\vartheta},\mathbf{\delta}} \Omega_\text{LR}(\mathbf{X},\mathbf{Z},\mathbf{S},\mathbf{\lambda},\mathbf{\vartheta},\mathbf{\delta})  = -pg(\mathbf{X}) \\
    \nonumber & \phantom{...................} +\sum\limits_{t=1}^T\sum\limits_{i=1}^n \Big(\nu_{it}z_{it} - \kappa_{it}s_{it} - \lambda_{it}[s_{it}-L(x_t,y_{it})] \\
    \nonumber & \phantom{.....................................} -\vartheta_{it}[L(x_t,y_{it})-M(1-z_{it})-s_{it}] \\
    \nonumber & \phantom{.....................................}  -\delta_{it}[L(x_t,y_{it})-d_{it}-M(1-z_{it})]\Big) \\
    & \nonumber = -pg(\mathbf{X})+\sum\limits_{t=1}^T \sum\limits_{i=1}^n \left(\omega_{1,it}z_{it}+\omega_{2,it}s_{it}+\omega_{3,it}L(x_t,y_{it})+\omega_{4,it}\right) \\
    & \nonumber \text{s.t.} \\
    & \nonumber \text{Constraints~\eqref{eqn:cons2},\eqref{eqn:cons3},\eqref{eqn:cons6}} \\
    & \lambda_{it},\vartheta_{it},\delta_{it} \geq 0,i\in\I,t\in\T, \label{eqn:conslr}
\end{flalign}

\noindent where $\mathbf{X} \in Q^T$, $\mathbf{Z} \in \{0,1\}^{n \times T}$, $\mathbf{S} \in \mathbb{R}^{n \times T}$, $\omega_{1,it}=(\nu_{it}-M(\vartheta_{it}+\delta_{it}))$, $\omega_{2,it}=(-\kappa_{it}-\lambda_{it}+\vartheta_{it})$, $\omega_{3,it}=(\lambda_{it}-\vartheta_{it}-\delta_{it})$, and $\omega_{4,it}=(M(\vartheta_{it}+\delta_{it})+\delta_{it}d_{it})$. $\text{3MCLP}_{\text{LR}}$ has (5n+3)T variables and 6nT constraints.

For given $\mathbf{\lambda}$, $\mathbf{\vartheta}$, and $\mathbf{\delta}$, $W=\sum\limits_{t=1}^T\sum\limits_{i=1}^n\omega_{4,it}$ is a constant and can be discarded while solving the problem. Moreover, $(\text{3MCLP}_\text{LR})$ can be decomposed into two sub-problems. The first sub-problem is solved for determining $\mathbf{Z}$ and $\mathbf{S}$ variables, and the second sub-problem is solved for determining $\mathbf{X}$ variables, since there is no remaining inter-dependency among these variables after relaxing the corresponding constraints, i.e.,
\begin{flalign}
    \nonumber (\text{P}_1)\ : & \max\limits_{\substack{\mathbf{Z} \in \{0,1\}^{n \times T} \\ \mathbf{S} \in \mathbb{R}^{n \times T}}}\ \Omega_{\text{P}_1}(\mathbf{Z},\mathbf{S}) = \sum\limits_{t=1}^T \sum\limits_{i=1}^n \left(\omega_{1,it}z_{it}+\omega_{2,it}s_{it}\right) \\
    \nonumber & \text{s.t.} \\
    \nonumber & \text{Constraints~\eqref{eqn:cons2},\eqref{eqn:cons3},\eqref{eqn:cons6}},\eqref{eqn:conslr} \\
    \nonumber (\text{P}_2)\ : & \max\limits_{\mathbf{X}\in Q^T}\ \Omega_{\text{P}_2}(\mathbf{X}) =-pg(\mathbf{X})+\sum\limits_{t=1}^T \sum\limits_{i=1}^n \omega_{3,it}L(x_t,y_{it}) 
\end{flalign}

$(\text{P}_1)$ can easily be solved by inspection, where $\mathbf{Z}$ and $\mathbf{S}$ variables are determined by checking $\omega_{1,it}$ and $\omega_{2,it}$ values. Since $z_{it}$ and $s_{it}$ variables are independent for each $i\in\I$ and $t\in\T$, we can determine the optimal solution by checking the multipliers of each user $i$ in each interval $t$. Recall that the maximum value of $z$ and $s$ variables are 1 and $M$ and the minimum values are 0, respectively. Moreover, $s$ variables can be positive if the associated $z$ variable is 1. Since the objective is maximization, we set $s$ and $z$ values to 0 if both $\omega_{1,it}$ and $\omega_{2,it}$ are negative. On the other hand, $z$ is set to 1 whenever $\omega_{1,it}$ is non-negative without checking $s$ value. In this case, $s$ value is set to 0 if $\omega_{2,it}$ is negative and to $M$ if $\omega_{2,it}$ is positive. When $\omega_{1,it}$ is negative and $\omega_{2,it}$ is positive, there exist two cases. If $\omega_{1,it}+M\omega_{2,it}$ value is non-negative, we set $z$ to 1 and $s$ to $M$, so that the objective can be improved, otherwise, we set both variables to 0. Table~\ref{tab:zs} summarizes the optimal solution to ($\text{P}_1$).

\begin{table}[!t]
    \centering
    \caption{Optimal solution of ($\text{P}_1$).}
    \begin{tabular}{l l r r l}
    \toprule
    \multicolumn{1}{c}{$\omega_{1,it}$} & \multicolumn{1}{c}{$\omega_{2,it}$} & \multicolumn{1}{c}{$z_{it}$} & \multicolumn{1}{c}{$s_{it}$} & \multicolumn{1}{c}{Condition} \\
    \midrule
    $\geq 0$ & $\geq 0$ & 1 & $M$ & - \\ 
    $\geq 0$ & $< 0$ & 1 & 0 & - \\ 
    $< 0$ & $\geq 0$ & 1 & $M$ & $\omega_{1,it}+M\omega_{2,it} \geq 0$ \\ 
    & & 0 & 0 & $\omega_{1,it}+M\omega_{2,it} < 0$ \\ 
    $< 0$ & $< 0$ & 0 & 0 & - \\ 
    \bottomrule
    \end{tabular}
    \label{tab:zs}
\end{table}

($\text{P}_2$) is relatively difficult to solve, since we still have a non-convex constrained optimization problem. Therefore, we further relax the exponential term in $L$ and assume that each user has LoS connection, i.e., $e^{-\beta(\theta(x,y)-\alpha)}=0, \forall x\in Q,y\in S$. Then, relaxed ($\text{P}_2$) can be stated as 
\begin{flalign}
    \nonumber (\text{P}_2^\prime)\ : & \max\limits_{\mathbf{X}\in Q^T}\ \Omega_{\text{P}_2}^\prime(\mathbf{X}) = -pg(\mathbf{X}) + \sum\limits_{t=1}^T\sum\limits_{i=1}^n \omega_{3,it}\left(F+10\eta\log_{10}\left(\norm{x_t-y_{it}}\right)+B\right)  \\
    & = O -pg(\mathbf{X})+\sum\limits_{t=1}^T\sum\limits_{i=1}^n\omega_{5,it}\log_{10}(\norm{x_t-y_{it}})
\end{flalign}

\noindent where $O=\sum\limits_{t=1}^T\sum\limits_{i=1}^n\omega_{3,it}(F+B)$, and $\omega_{5,it}=10\eta\omega_{3,it}$. $O$ is a constant term and can be discarded while solving ($\text{P}_2^\prime$). 

Note that ($\text{P}_2^\prime$) seems similar to the original problem in \eqref{eqn:model1} in terms of number of variables. However, the original problem cannot be explicitly solved since $C_t$ sets include an \textit{if} statement, which needs to be transformed to additional new constraints by introduction of new auxiliary variables, $\mathbf{Z} \in \{0,1\}^{n\times T}$. The original problem has $T(2n+3)$ variables and $6nT$ constraints. ($\text{P}_2^\prime$), on the other hand, does not have these auxiliary variables and have only $3T$ variables denoted by $\mathbf{X} \in Q^T$ with no constraints. Although this sub-problem seems smaller in size than the original problem, it is still difficult to solve the sub-problem due to non-convexity in the objective function. 

One promising approach to solve ($\text{P}_2^\prime$) would be relaxing the relocation penalty term in the objective function, dividing the sub-problem into smaller sub-problems, and independently determining $x_t \in Q$ for each interval $t\in T$. In fact, such an approach would have been used for the original formulation with reducing the problem into multiple single-interval problems by relaxing the relocation penalty, and solving each sub-problem independently, e.g. using a meta-heuristic like genetic or particle-swarm algorithms. However, these approaches would be optimal only when all parameters are the same for all intervals, which is defined as the homogeneous case in the sequel. Since such a case would not require relocation of the UAV during the planning horizon, dividing the problem with respect to time would be effective. However, in cases where parameters, $w$ and $d$, change in different intervals, which is defined as the heterogeneous case in the sequel, division approaches would suffer from relocation cost and cause sub-optimal solutions.

In ($\text{P}_2^\prime$), the relocation penalty term in the objective function is concave, since it involves negative of the sum of convex functions. However, the relaxed coverage term involves the multiplications of a concave logarithmic function with a coefficient. This coefficient is the combination of the Lagrange multipliers and it is unrestricted in sign. As we have assumed that the Lagrange multipliers are fixed, we can decompose this summation into two parts with positive and negative coefficients. Then, the summation with negative multipliers becomes a convex function, while the summation with positive multipliers becomes a concave function. As a result, we have the following relaxed sub-problem:
\begin{multline}
    (\text{P}_2^\prime)\ :\ \max\limits_{\mathbf{X}\in Q^T}\ \Omega_{\text{P}_2}^\prime(\mathbf{X}) = -pg(\mathbf{X}) \\ + \sum\limits_{t\in\T}\sum\limits_{i\in\I_t^+}\omega_{5,it}\log_{10}(\norm{x_t-y_{it}})
    +\sum\limits_{t\in\T}\sum\limits_{i\in\I_t^-}\omega_{5,it}\log_{10}(\norm{x_t-y_{it}})
\label{eqn:x}
\end{multline}

\noindent where $\I_t^+=\{i:\omega_{5,it}\geq 0\}$ and $\I_t^-=\{i:\omega_{5,it}<0\}$. Note that ($\text{P}_2^\prime$) can be solved by so-called ``Difference of Convex (DC)'' programming optimization techniques \cite{Horst1999}, which is a heuristic method and described later in the text. 

The summary of the LDA to solve ($\text{3MCLP}_\text{LDA}$) is given in Algorithm~\ref{algo:lr}. We initialize all Lagrange multipliers and run the algorithm for at most $K$ iterations. At each iteration $k\geq 0$, $\mathbf{Z}_k$, $\mathbf{S}_k$ and $\mathbf{X}_k$ variables are determined. The objective function values of the LR problem and the original problem are calculated with these values. Whenever this gap is closed, the algorithm is terminated. Otherwise, the Lagrange multipliers are updated according to the sub-gradient technique described by \cite{Fisher2004}. 

In this technique, each multiplier is updated by adding some values to its current value. This value is calculated as a multiplication of two other values. The first one is the sub-gradient of the corresponding multiplier according to the objective function of the relaxed problem that is normalized with respect to all three sub-gradient values of Lagrangean multipliers. The second variable is a non-negative real number, the step size, which should diminish throughout the algorithm. We use a common procedure from the literature to update the step size, where the step size is initialized as a real number between 0 and 2, and then halved whenever the objective function value of the relaxed problem is not improved after a fixed number of iterations. After updating the multipliers, we recursively apply the same steps to find new $\mathbf{Z}_k$, $\mathbf{S}_k$ and $\mathbf{X}_k$ values. 

As we solve a relaxed version of the second sub-problem in each iteration, the algorithm may not converge, thus, we set a maximum iteration limit $K$. The algorithm terminates after $K$ iterations with an upper bound, $\Omega_\text{UB}$, which is the minimum objective function value among all objective function values attained after solving LR problems, to the original problem unless the optimal solution is found.

\begin{algorithm}[!t]
\caption{LDA.}\label{algo:lr}
\begin{algorithmic}[1]
\renewcommand{\algorithmicrequire}{\textbf{Input:}}
\REQUIRE $Q$, $\mathbf{Y}$, $\mathbf{D}$, $\mathbf{w}$, $K$.
\renewcommand{\algorithmicrequire}{\textbf{Initialization:}}
\REQUIRE Arbitrarily initialize $\lambda_{it},\vartheta_{it},\delta_{it},\forall i\in\I,\forall t\in\T$, $k\leftarrow 0$, $\Omega_{\text{UB}}\leftarrow +\infty$.
\WHILE{$k<K$}
\STATE Find $\mathbf{Z}_k$ and $\mathbf{S}_k$ variables according to Table~\ref{tab:zs} with given $\lambda_{it},\vartheta_{it},\delta_{it}$.
\STATE Set $\mathbf{X}_k\leftarrow \arg\max\limits_{\mathbf{X}\in Q^T}\Omega_{\text{P}_2}^\prime(\mathbf{X})$ using given $\lambda_{it},\vartheta_{it},\delta_{it}$.
\IF{$\Omega_{\text{LR}}\left(\mathbf{X}_k,\mathbf{Z}_k,\mathbf{S}_k,\left\{\lambda_{it}\right\},\left\{\vartheta_{it}\right\},\left\{\delta_{it}\right\}\right)<\Omega_{\text{UB}}$}
\STATE $\Omega_{\text{UB}}\leftarrow \Omega_{\text{LR}}\left(\mathbf{X}_k,\mathbf{Z}_k,\mathbf{S}_k,\left\{\lambda_{it}\right\},\left\{\vartheta_{it}\right\},\left\{\delta_{it}\right\}\right)$
\ENDIF
\IF{$\Omega_{\text{UB}}=\Omega(\mathbf{X}_k)$}
\STATE $\textbf{break}$
\ELSE
\STATE Update $\lambda_{it},\vartheta_{it},\delta_{it}$ by using subgradient technique.
\ENDIF
\STATE $k\leftarrow k+1$.
\ENDWHILE
\STATE \textbf{return} $\Omega_{\text{UB}}$
\end{algorithmic}
\end{algorithm}

The complexity of the LDA depends on two main operations at each iteration. The first operation includes simple algebraic operations to find $\mathbf{Z}$ and $\mathbf{S}$ variables, which take $O(nT)$ time. The second operation includes finding $\mathbf{X}$ variables by applying DC programming algorithm (DCA) defined by \cite{Horst1999}. In the DCA, the objective is to create two sequences of variables, so that the first sequence of variables converges to the local optimum of the primal problem, while the second sequence of variables converges to the local optimum of the dual problem. The key point is that the symmetry between the primal and dual problems would follow a variation on the classical sub-gradient technique used in convex optimization.

The DCA starts with an arbitrary feasible solution. At each iteration of the algorithm, first a solution is found within the sub-gradient domain of the first concave function. Then, by using this solution, a new solution is found within the sub-gradient domain of the second concave function. It is shown that recursively applying these two steps converges to a local maximum after a finite number of iterations. Indeed, the DCA has two gradient calculation steps at each iteration, thus, the complexity of a single iteration is $O(T)$. Assuming that the maximum number of iterations of DCA is set to $\mathcal{K}$, the worst-case complexity of DCA is $O(\mathcal{K}T)$. Eventually, the worst-case complexity of the LDA is $O(KnT+\mathcal{K}T)$.

Note that any $\mathbf{X}_k$ found during the LDA can be used to find a lower bound for the original problem by inputting $\mathbf{X}_k$ into Equation \eqref{eqn:model1}. As a side benefit of the LDA, we can track the optimality gap of a problem while running the algorithm. In particular, we store a list that keeps all $\Omega_\text{D}(\mathbf{X}_k)$ values and report the maximum of these values as the lower bound to the original problem, $\Omega_{\text{LB}}^\ast$. In fact, the performance of LDA depends significantly on the gap between the best upper and lower bounds attained after each iteration. We illustrate how this gap changes with parameters in the computational study in Section~\ref{sec:results}.

Although our formulation and the LDA are significant improvements for a non-convex constrained optimization problem, the worst-case complexity is still exponential due to the NP-hardness of the underlying problem. Therefore, in the next section, we propose a CA approach to overcome these challenges.

\section{Continuum approximation}\label{sec:ca}
the computational burden of discrete formulation increases sharply as the numbers of users and time intervals increase. CA could be a remedy to mitigate this challenge due to less data requirements and closed or near-closed form solutions. CA approach was first proposed by \cite{Newell1971} to find optimal dispatching times of public buses and refined by \cite{Dasci2001, Ouyang2006, Cui2010, Daganzo2010, Wang2013} for several static facility location problems. Recently, \cite{Wang2017} applied the CA approach for a dynamic facility location problem, where facilities are not allowed to be closed after opening and demand is assumed to grow throughout a finite horizon. However, our approach has no assumption on user demand. Also the service area of the facilities is in 3-D space in our case. A recent survey on CA can be found in \cite{Ansari2018}. 

To the best of our knowledge, none of the CA studies incorporate the vertical dimension into the dynamic problem formulation. Moreover, the problems studied to date are related to facility location problems, where the objective is typically the minimization of the total cost of the facilities to serve all customers in a finite region. Our approach differs from the existing studies in terms of three pillars. First, we relax the static assumption on facilities and develop a CA model to a problem where a facility is allowed to move in spatio-temporal continuum without opening/closing decisions. Second, the demand (MSLT in our context) is not necessarily growing in time, but randomly determined by each user. Third, we develop a discretization procedure in the 3-D space to find the exact location sequence of a single facility instead of determining a discrete 2-D facility location plan to cover all demand.

In a CA model, all parameters are assumed to be continuous over the service area. For our problem, user weights, $\mathbf{w}\in [0,1]^n$, and demands, $\mathbf{D}\in\mathbb{R}^{n\times T}$, are approximated with continuous functions in spatio-temporal continuum, i.e. $w_{it}$ and $d_{it}$ are approximated by $w(y,t):S\times \mathbb{R} \rightarrow \mathbb{R}$ and $d(y,t):S\times \mathbb{R} \rightarrow \mathbb{R}$. All these spatial attributes are assumed to vary continuously and slowly in $y$ and $t$. To derive the solutions to the CA model, we first analyze the homogeneous case where all parameters are assumed to be constant over an infinite service area, $\mathbb{R}^2$, and infinite horizon, $\mathbb{R}$, then use these results as building blocks to extend the derivations for more general cases.

\subsection{Homogeneous case}\label{subsec:homogeneous}
In a homogeneous scenario, we assume that $\mathbf{w}$ and $\mathbf{D}$ values are constant for all $y\in \mathbb{R}^2$ and $t\in\mathbb{R}$, i.e., $w(y,t)=w$, $d(y,t)=d$. Clearly, there is no need to move the UAV-BS in this case as nothing changes for the entire time horizon. Recall that the signal loss of a user depend on two distances, horizontal and vertical. Fixing the vertical distance yields the following useful result:

\begin{proposition}\label{prop:disc}
In an infinite homogeneous plane, the optimal covering area should form a regular disc when the UAV-BS altitude is fixed.
\end{proposition}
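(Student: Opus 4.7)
The plan is to reduce the path loss function to a function of a single variable (the horizontal distance from the UAV-BS ground projection) and then invoke the monotonicity of $L$ in $r$ already asserted after Eq.~\ref{eqn:pl}. First, I would fix the UAV-BS altitude at some $H>0$, let $\bar x = K x$ denote the horizontal projection of the UAV-BS position $x$ onto the ground, and parameterize an arbitrary ground point $y\in\mathbb{R}^2$ by polar coordinates $(r,\phi)$ centered at $\bar x$, where $r = \|K(x-y)\|$. A direct computation gives $\|x-y\|=\sqrt{r^2+H^2}$ and $\theta(x,y) = (180/\pi)\arctan(H/r)$, neither of which depends on the azimuth $\phi$. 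Substituting into Eq.~\ref{eqn:pl} shows that $L(x,y)$ reduces to a function of $r$ alone; call it $\tilde L_H(r)$.

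Next, the coverage condition $L(x,y)\le d$ becomes $\tilde L_H(r)\le d$, which is manifestly invariant under rotations about $\bar x$, so the covered set is rotationally symmetric. By the monotonicity of $L$ in $r$ at fixed altitude stated right after Eq.~\ref{eqn:pl}, $\tilde L_H$ is strictly increasing, and therefore the sublevel set $\{r\ge 0 : \tilde L_H(r)\le d\}$ is an interval of the form $[0,r^\star]$ with $r^\star = \tilde L_H^{-1}(d)$ whenever $\tilde L_H(0)\le d$ (the degenerate cases $\tilde L_H(0)>d$ and $\tilde L_H(r)\le d$ for all $r$ give the empty set and $\mathbb{R}^2$, respectively, both consistent with the disc description). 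The covering area is therefore the closed disc of radius $r^\star$ centered at $\bar x$.

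Finally, for the optimality claim, I would observe that because $w$ and $d$ are constant over the infinite plane, the aggregate coverage $\int_{\mathbb{R}^2} w\,\mu(x,y,d)\,dA(y)$ depends on $x$ only through $H$ (translation invariance in the horizontal coordinates), so any disc obtained from the above construction attains the same, optimal coverage; the shape is necessarily a disc. I do not expect any substantive obstacle here; the argument is essentially the one-line observation that at fixed altitude the path loss depends only on the horizontal distance, and the only mild care needed is dispatching the two degenerate regimes where $\tilde L_H(0)>d$ or $\tilde L_H$ never reaches $d$.
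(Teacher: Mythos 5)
Your proposal is correct and follows essentially the same route as the paper's proof: fix the altitude, observe that the loss depends only on the horizontal distance $r$ (hence the covered set is rotationally symmetric), and use the monotonicity of $L$ in $r$ to conclude the sublevel set $\{L\le d\}$ is a disc centered at the ground projection. Your added care about the degenerate regimes and the translation-invariance argument for optimality are slight refinements of, not departures from, the paper's argument, which instead spells out why the loss increases in $r$ (distance grows and elevation angle shrinks, both raising the loss).
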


\begin{proof}
Suppose that the UAV-BS altitude is fixed to $h$. Then, the coverage level at any point in $S$ can only change with respect to the horizontal distance, $r$. Starting from an arbitrary point in $S$, increasing $r$ values causes an increase in the overall distance between the user and the UAV-BS and a decrease in the angle. Both of these have negative impacts on signal loss. Therefore, signal loss monotonically increases with $r$. As $d$ is constant everywhere, the coverage eventually drops down to 0 after a particular $r$ value independent of the direction, since the signal loss eventually exceeds $d$. Therefore, the optimal service area should be a disc with radius $r$ within which signal loss is less than or equal to $d$ and the UAV-BS is located at the center. \QEDA
\end{proof}

With Proposition~\ref{prop:disc}, we can argue that the maximum coverage area around any location $y\in S$ can be found where the average signal loss is equal to $d$ in this area. Let $A(y,h)$ denote the size of this covering area around location $y \in S$ when the altitude of the UAV-BS is $h$. $A(y,h)$ can be considered as another approximation used to define the service area of the UAV-BS in CA, which is defined as a subset of users in the discrete formulation by $C_t \subseteq I$. Since all parameters are constant everywhere for the homogeneous case, the coverage area should be equal at every location, i.e., $A(y,h)=A(h),\ \forall y\in S$.

One key point in the CA technique is to find an approximation for the cost of service with respect to the service area so that any parameter can be expressed as a local property of point $y\in S$. For instance, \cite{Daganzo2010} proposes a total cost formula for a network in which a terminal is used to transfer products from a depot to end-users. In their proposed model, an outbound and inbound cost is aggregated to determine the total cost. The outbound cost is approximated with respect to estimated unit demand, while the inbound cost is approximated with respect to the average delivery distance within the service area.

In our context, instead of the total cost, we need to approximate the average signal loss in the covering area. As the exact loss value is not known in this area, we approximate it by assuming the loss value to increase with the same rate as the average horizontal distance increases within the covering area. \cite{Ouyang2006} show that the average distance in a disc with area $A$ can be approximated as $2\slash (3\sqrt{\pi})\times \sqrt{A}$. We use the same rate to approximate the signal loss increment within the covering area with size $A(h)$. 

Around any location $y\in S$, the signal loss is assumed to be initialized with the loss value determined as if the UAV-BS is located just atop of the point. Let $\overline{L}(h)$ denote the initial signal loss value when the UAV-BS altitude is $h$ over the point $y\in S$. While enlarging the covering area around this point, the average signal loss within this area is approximated as $\overline{L}(h)(2\slash 3)\sqrt{A(h)\slash \pi}$. Note that this approximation can be considered as a substitute of $L$ function in the discrete formulation. Instead of using exact user location, we use the service area to approximate average signal loss.

To approximate the total weight in the service area, on the other hand, we use the same approach as \cite{Daganzo2010} has used. That is, the total weight in the covering area is assumed to linearly increase with the size of the area, i.e., $w \cdot A(h)$. As a result, the CA model can be stated as follows:
\begin{flalign}
     \text{3MCLP}_\text{C}: & \max\limits_{h,A(h)\geq 0}\ \Omega_\text{C}(h,A(h))= wA(h)\left(\frac{d-\overline{L}(h)(2\slash 3)\sqrt{A(h)\slash \pi}}{d-L^-}\right),
\label{eqn:homogen}
\end{flalign}

\noindent where $\overline{L}(h)=F+10\eta\log_{10}(h)+B$. Note that for any 3MCLP problem in $\mathbb{R}^3$, this approach can be extended with different $\overline{L}(h)$ functions.

We have no discrete variable in the CA model, hence, this model is expected to be solved easier than the discrete formulation. The following is a comparison of discrete and continuous problems. In the homogeneous case, the discrete formulation can be reduced to a single-period problem without considering the relocation penalty, and subscript $t$ can be removed. In such a case, each user can be tracked with $z_i\in \{0,1\}$ and $s_i \in \mathbb{R}$ variables instead of $z_{it}$ and $s_{it}$. $z_i$ is equal to 1 if the demand of corresponding user is satisfied, and 0 otherwise, while $s_i$ is equal to signal loss value when $z_i = 1$, and 0 otherwise. Since we remove $t$ subscripts, the UAV-BS location can also be determined with a single vector, $x\in Q$. As a result, the discrete formulation would have $2n+3$ variables and $6n$ constraints. On the other hand, the CA model has only two variables, $h$ and $A(h)$. Therefore, the computational performance of the CA model would be promising.

In \eqref{eqn:homogen}, the total weight in the service area is approximated as $wA(h)$ and the average signal loss is approximated as $\overline{L}(h)(2\slash 3)\sqrt{A(h)\slash \pi}$ since the average distance within a hypothetical circular service area of size $A$ can be approximated as $(2\slash 3)\sqrt{A\slash \pi}$ \cite{Ouyang2006}. For fixed $h$, this problem can be solved by the first-order condition of $\Omega_\text{C}$, i.e., $\nabla\Omega_\text{C}(h,A)=0$, since $\Omega_\text{C}$ is concave as shown in the following lemma. 

\begin{lemma}
$\Omega_\text{C}$ is concave.
\end{lemma}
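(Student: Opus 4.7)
The plan is to treat $\Omega_\text{C}$ as a function of $A$ for fixed $h$ (as suggested by the preceding sentence "For fixed $h$, this problem can be solved by the first-order condition"), expand the product, and show concavity by computing the second derivative. Writing out the definition in \eqref{eqn:homogen} and distributing gives
\begin{equation*}
\Omega_\text{C}(h,A) = \frac{w}{d-L^-}\left(dA - \overline{L}(h)\,\frac{2}{3\sqrt{\pi}}\,A^{3/2}\right).
\end{equation*}
The prefactor $w/(d-L^-)$ is a positive constant (since $w\in[0,1]$ and $d>L^-$ by the earlier assumption $d>L^-$ used in defining $\mu$). Similarly $\overline{L}(h) = F + 10\eta\log_{10}(h) + B$ is a positive constant for fixed $h$ within the UAV operating range, since $F, B, \eta > 0$ in the Al-Hourani path-loss model.

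Next I would observe that for fixed $h$, the function decomposes as the sum of a linear term $dA$ and a term proportional to $-A^{3/2}$. The linear part is trivially concave. For the second part, a direct computation gives
\begin{equation*}
\frac{d^2}{dA^2}\bigl(-A^{3/2}\bigr) = -\frac{3}{4}A^{-1/2} < 0 \quad \text{for } A>0,
\end{equation*}
so $-A^{3/2}$ is strictly concave on $(0,\infty)$. Multiplying by the positive constant $\overline{L}(h)\,(2/3\sqrt{\pi})\cdot w/(d-L^-)$ preserves concavity, and adding the linear part preserves it as well. Hence $\Omega_\text{C}(h,\cdot)$ is concave in $A$ on $[0,\infty)$, which is exactly what is needed to solve by the first-order condition in $A$.

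The main obstacle, if any, is essentially one of interpretation: the statement "$\Omega_\text{C}$ is concave" could be read as joint concavity in $(h,A)$, but the surrounding text makes clear that the intended usage is concavity in $A$ for fixed $h$ (so that a unique optimal service area $A^*(h)$ exists for each altitude). A joint-concavity reading would be false in general, since $\overline{L}(h)$ is concave in $h$ (the logarithm is concave), making $-\overline{L}(h)A^{3/2}$ a product of a concave function and a convex function with no definite curvature. I would therefore frame the proof explicitly as concavity in $A$ for fixed $h$, since that is the property actually invoked by the authors when writing the first-order condition.
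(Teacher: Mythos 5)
Your proof is correct but takes a genuinely different route from the paper's. The paper computes the full $2\times 2$ Hessian of $\Omega_\text{C}$ in $(h,A)$ and argues negative semi-definiteness from the signs of the principal minors (valid, it claims, once $h\gtrsim 3.68$), i.e., it asserts \emph{joint} concavity. You instead fix $h$, expand the product, and differentiate twice in $A$ alone, which is the weaker statement that the first-order condition $\partial\Omega_\text{C}/\partial A=0$ actually requires. Your caveat about joint concavity is not merely interpretive --- it is substantive. Since $\overline{L}(h)=F+10\eta\log_{10}(h)+B$ is concave in $h$, the term $-\,c\,\overline{L}(h)A^{3/2}$ (with $c>0$) is convex in $h$ for fixed $A>0$; a direct computation gives
\begin{equation*}
\frac{\partial^2\Omega_\text{C}}{\partial h^2}
= +\,\frac{20\eta w A^{3/2}}{3\ln(10)\sqrt{\pi}\,(d-L^-)\,h^2} \;>\;0,
\end{equation*}
whereas the paper records this $(1,1)$ entry with a negative sign. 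With the correct sign the leading principal minor is positive and the determinant $(1,1)(2,2)-(1,2)^2$ is negative, so the Hessian is indefinite and joint concavity fails; your restriction to concavity in $A$ for fixed $h$ is thus the version of the lemma that is both true and actually used in deriving $A^\ast(h)$ in \eqref{eqn:homogen2}. One small inaccuracy in your justification: $B=\phi_{\text{LoS}}-\phi_{\text{NLoS}}$ is \emph{negative} for the stated suburban parameters ($0.1-21$), so positivity of $\overline{L}(h)$ does not follow from ``$F,B,\eta>0$''; it follows because $F+B=10\eta\log_{10}(4\pi f/c)+\phi_{\text{LoS}}>0$, so $\overline{L}(h)>0$ for all $h\geq 1$, which is what you need for the coefficient of $-A^{3/2}$ to be positive.
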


\begin{proof}
To show the concavity of $\Omega_\text{C}$, we use the Hessian matrix of the function. After a number of algebraic operations, the Hessian of $\Omega_\text{C}$ can be found as
\begin{equation}
    \nabla^{\prime\prime} \Omega_\text{C}(h,A) = 
    \begin{bmatrix}
        -\frac{20\eta wA^{3\slash 2}}{3\log(10)\sqrt{\pi}(d-L^-)h^2} & -\frac{10\eta w A^{1\slash 2}}{\log(10)\sqrt{\pi}(d-L^-)h} \\
        -\frac{10\eta w A^{1\slash 2}}{\log(10)\sqrt{\pi}(d-L^-)h} & -\frac{(F+B)wA^{-1\slash 2}}{2\sqrt{\pi}(d-L^-)}-\frac{5\eta wA^{-1\slash 2}\log_{10}(h)}{\sqrt{\pi}(d-L^-)}
    \end{bmatrix}.
\end{equation}

Note that we assume that $d\geq L^-$ and $w,\eta \geq 0$. Therefore, the first order principal minor of the above matrix is obviously negative. The second order principal minor is positive when $h\geq 10^{3\slash \log^2(10)}\approx 3.68$. As the minimum altitude allowed to hover UAVs typically starts from around 50 meters, this value is sufficiently small to assume that this principal minor is positive. Therefore, we have a negative semi-definite Hessian matrix, which yields that $\Omega_\text{C}$ is concave.\QEDA
\end{proof}

As a result, the optimal service area at altitude $h$ denoted by $A^\ast(h)$ and the corresponding total covered weight denoted by $\Omega_\text{C}(h,A^\ast(h))$ can be found as follows:
\begin{flalign}
    A^\ast(h) & = \pi\left(\frac{d}{\overline{L}(h)}\right)^2
\label{eqn:homogen2} \\
    \Omega_\text{C}(h,A^\ast(h)) & = \frac{\pi wd^3}{3(d-L^-)\overline{L}^2(h)}. \label{eqn:homogen3}
\end{flalign}

Note that \eqref{eqn:homogen3} can be maximized when $\overline{L}(h)$ takes its minimum value. Since $\overline{L}(h)$ is a monotonically increasing function in $h$, for the homogeneous case, the optimal service area can be achieved when $h$ takes its minimum value in $Q$. As a result, the optimal coverage for the homogeneous case can be achieved at any location at this minimum altitude with the optimal service area found by Equation \eqref{eqn:homogen2}.

\subsection{Heterogeneous case}\label{subsec:heterogeneous}
To solve the heterogeneous case, we first consider the case where the UAV-BS movement is ignored. Then, we propose a regularization algorithm to improve the relaxed solution by iteratively shifting the UAV-BS locations at different time intervals with repulsive forces.

When UAV-BS position is assumed to be fixed, the problem can be considered independently for each interval. Recall that $w$ and $d$ are allowed to change in this case. \cite{Wang2017} show that the parameters can be approximated with a level-based approach in the temporal continuum for dynamic problem setup without sacrificing from solution accuracy. In particular, the service area within a specific time interval is approximated by using the values at the median of the interval and then the solution found according to these values is assumed to be valid for the entire interval. 

We use a similar approach and solve all sub-problems where $w$ and $d$ values are assumed to be equal to their values at the median of an interval. Since  in our context all intervals are assumed to be identical in duration, we can determine the median of each interval beforehand. Let $\tau_t$ denote the median of interval the $t$ and $\gamma$ denote the length of each interval. Then, we can use $w(y,\tau_t)$ and $d(y,\tau_t)$ as approximations of parameters at each location $y\in S$ in interval $t$.

Since $w$ and $d$ are assumed to be slow varying functions over $S$, the service area  $A(y,h)$ should also vary slowly. Let $\varphi_t(y,A(y,h))$ denote the covered weight around $y\in S$ when the service area is $A(y,h)$ at a fixed altitude $h$ in interval $t$. Then, the following problem can be solved for each $y\in S$ for each interval $t$.
\begin{equation}
    \max\limits_{h,A(y,h)\geq 0}\ \varphi_t(y,A(y,h))=w(y,\tau_t)A(y,h)\frac{d(y,\tau_t)-\overline{L}(h)(2\slash 3)\sqrt{A(y,h)\slash \pi}}{d(y,\tau_t)-L^-}
\label{eqn:heterogen}
\end{equation}

A similar analysis to compare this model with the discrete model can be provided as we present in the homogeneous case. In a heterogeneous case, the discrete model has $T(2n+3)$ variables and $6nT$ constraints. Since all parameters are assumed to be slow varying in $S$ in the CA model, it can be solved point-by-point without a significant loss in the objective function. Suppose that $S$ is represented by $\overline{s}$ different points, which are evenly distributed with small distances from each other. Then, for each point, the optimal solution can be found by \eqref{eqn:homogen2} and \eqref{eqn:homogen3}. As a result, the optimal solution for fixed $h$ can be found with $2\overline{s}\log \overline{s}$ algebraic operations, which can computationally outperform solving an MINLP as in the discrete model.

\eqref{eqn:heterogen} can be solved with a similar approach introduced in the homogeneous case for any $y\in S$. The only difference is replacing constant parameters with the local parameters for each location $y$. Let $\varphi_t^\ast(y,A^\ast(y,h))$ denote the optimal objective function value associated with each $y$ for fixed $h$. Then, the location $y$ that yields the maximum $\varphi_t^\ast(y,A^\ast(y,h))$ can be considered as the footprint of the UAV-BS on $S$ for interval $t$, if the UAV-BS altitude is $h$. Let $y_t^\ast(h) \in S$ denote this footprint in interval $t$, then, an interval bracket search algorithm (e.g., bi-section search) can be used to find the best $h$ that maximizes $\varphi_t^\ast(y_t^\ast(h),A^\ast(y_t^\ast(h),h))$ over the domain of $h$ in $Q$. We use $h_t^\ast$ to denote this altitude for interval $t$. Then, the UAV-BS location and the total covered weight in interval $t$ are determined as $x_t^\ast = (y_t^\ast,h_t^\ast)$ and $\Omega_t^\ast \coloneqq \gamma\varphi_t(y_t^\ast,A^\ast(y_t^\ast,h_t^\ast))$, respectively. As a result the total coverage of CA denoted by $\overline{\Omega}_\textrm{C}$ can be found as $\overline{\Omega}_\textrm{C} = \sum_{t\in T} \Omega_t^\ast$.

As we relax the UAV-BS movement, the above procedure may result in a dispersed location sequence in $Q$. To alleviate such a result, we propose a regularization algorithm, where we search the neighborhood of the solution that the above procedure yields. Let $\{x_t^\ast\}$ and $\Omega^\ast=\Omega_\text{D}(\mathbf{X^\ast})$ denote the location sequence attained from the CA procedure and the true objective function value of this sequence, respectively. In the regularization algorithm, given a location sequence, at each iteration, we either select the two consecutive intervals that have the highest relocation distance with probability $\mathcal{P}$ or randomly select two intervals with probability $1-\mathcal{P}$. $\mathcal{P}$ can be interpreted as the probability of exploration in searching the neighborhood. 

After selecting the intervals, the UAV-BS location in these intervals are moved closer with a step size, $\rho$, and this movement is accepted if the objective function value is improved. Note that we use the true objective function, $\Omega_\text{D}(\cdot)$, defined in the original problem. If the objective function value improves due to this movement, then the incumbent objective function value and the location sequence is updated.

The regularization for two arbitrary intervals, $k$ and $j$, is illustrated in Fig.~\ref{fig:forces}. The transparent yellow discs depict the service areas in each interval, while the transparent blue disc shows the change in these areas. Empty red and filled green cross symbols depict the footprint of the UAV-BS before and after regularization, respectively. The angle between selected locations, $\theta_{kj}$, is used to determine horizontal and vertical repulsive forces to move the UAV-BS. 

\begin{figure}[!b]
    \centering
    \resizebox{\textwidth}{!}{\includegraphics{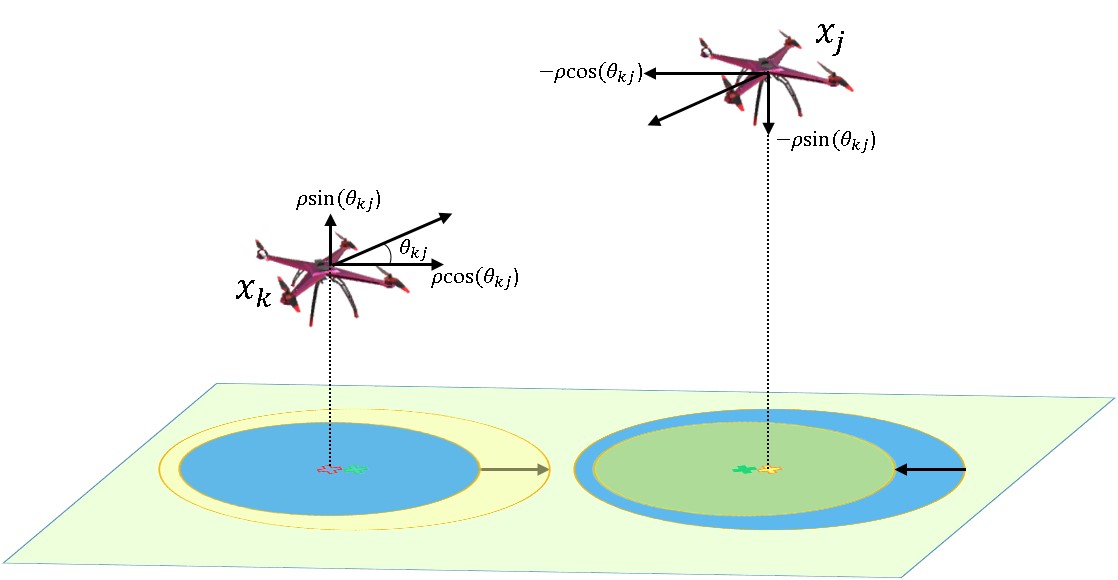}}
    \caption{Illustration of regularization.}
    \label{fig:forces}
\end{figure}

Note that lifting the altitude results in a larger service area, whereas lowering the altitude results in a smaller service area. We use such regularization based on the properties of $L(\cdot)$ to find a balance between improved angle versus worsened distance. Lifting the altitude increases the average distance to the UAV-BS. Thus, the service area is widened to boost the positive impact of increased average angle and vice versa. The rate of shrinkage or widening is applied as the rate that the altitude changes. For instance, lifting the UAV-BS from 200 m to 205 m yields widening the service area from $A$ to $(1+5\slash 200)A$, and moving down from 200 m to 195 m yields shrinking the service area from $A$ to $(1-5\slash 200)A$. Moreover, a second force with the same step size, $\rho$, is applied whenever the service area exceeds $S$ in the opposite direction of the violation.  

Note that overlapping is allowed in our problem, which is not the case in the classical 2-D CA algorithms, since our goal is to determine the locations for a single facility at different time intervals. Our algorithm terminates after $K$ consecutive iterations without improvement in the objective function value. The step size, $\rho$, is updated after each $l$ iterations to a smaller step size by multiplying with a scalar, $\varrho\in[0,1]$, to avoid aggressive movements in later stages of the algorithm. The CA algorithm is summarized in Algorithm~\ref{algo:regularize}.

\begin{algorithm}[!t]
\caption{CA Algorithm.}\label{algo:regularize}
\begin{algorithmic}[1]
\renewcommand{\algorithmicrequire}{\textbf{Input:}}
\REQUIRE $K$, $l$, $\mathcal{P}$, $\rho$, $\varrho$.
\renewcommand{\algorithmicrequire}{\textbf{Initialization:}}

\REQUIRE Determine $\{x_t\}$ by solving \eqref{eqn:heterogen}, and set $\{x_t^\ast\}$ to these locations. Set $\Omega^\ast$ as the objective function value with respect to $\{x_t^\ast\}$.
\STATE $k\leftarrow 0$, $i\leftarrow 0$, $\rho_k \leftarrow \rho$.
\WHILE{$k<K$}
\IF{$i==l$}
\STATE $\rho_k \leftarrow \rho_k \times \varrho$, $i\leftarrow 0$
\ENDIF
\STATE Select two intervals $t_1$ and $t_2$ and move $x_{t_1}$ and $x_{t_2}$ by $\rho_k$ closer to each other. Set $\Omega$ to the objective function value of this change.
\IF{$\Omega>\Omega^\ast$}
\STATE Set $\Omega^\ast \leftarrow \Omega$, and $k\leftarrow 0$. Update $\{x_t^\ast\}$ by replacing $x_{t_1}$ and $x_{t_2}$ with their new locations.
\ELSE
\STATE $k \leftarrow k+1$
\ENDIF
\STATE $i\leftarrow i+1$
\ENDWHILE\label{algo:regularize:step13}
\STATE \textbf{return} $\Omega^\ast$, $\{x_t^\ast\}$\label{algo:regularize:step14}
\end{algorithmic}
\end{algorithm}

\section{Computational results}\label{sec:results}
In this section, we provide an extensive computational study to compare the LDA and CA algorithm performances with the optimal solutions and provide insights on how heterogeneity in parameters affects the CA performance. Since there is no publicly available data set for 3MCLP, we first generate a synthetic data set and then present the comparison. All the codes are available online on \url{https://github.com/cihantugrulcicek/3DMCLP.git}.

\subsection{Data generation}
The performance of the proposed algorithms are tested through a series of 3MCLP instances with synthetic data. All instances are generated within $Q=[(0,1500)\times (0,1500)\times (50,500)]$ and $S=[(0,1500)\times (0,1500)]$. The parameters related to the signal loss function are adopted from \cite{Al-Hourani2014} for sub-urban environment which have $\eta=2$, $\alpha=4.88$, $\beta=0.43$, $\phi_\text{LoS}=0.1$, $\phi_\text{NLoS}=21$, and $f=2e9$.

For the CA model, the weight and the MSLT density functions are defined as $w(y,t) = \overline{w}[1+ \Delta_w\cos\left(\pi\norm{y}\right)] \upsilon(t)$ and $d(y,t) = \overline{d}[1+\Delta_d \cos(\pi\norm{y})] \upsilon(t)$, respectively. Here, $\overline{w}$ and $\overline{d}$ control the average values of weight and MSLT density, respectively, while $\Delta_w$ and $\Delta_d$ control the spatial variability. $\upsilon(t)$ is a function of $t\in[0,T]$ that controls the temporal variability. We consider three different functions to see how different behaviour of users in time affect the performance. The first function assumes that the parameter values increase in time, i.e., $\upsilon(t)=1+\log(1+\Delta_t t)$, while the second function assumes the values decrease in time, i.e., $\upsilon=\exp(-\Delta_t t)$, where $\Delta_t$ controls the degree of trend in time. The third function assumes that the parameters follow a random behaviour with a step function, $\upsilon(t)=1+\Delta_t\psi_k t \text{ for } t\in [\tau_{k-1},\tau_k)$, where $\psi_k$ is a uniform random variable taking values in [-1,1], which is regenerated for each interval $k$. Note that the system becomes homogeneous when $\Delta_w=\Delta_d=\Delta_t=0$. The relocation penalty value is determined with respect to average weight per area per time in an instance, i.e., $p=(1+\Delta_p)\frac{1}{T|S|}\int_T\int_S w(y,t)dydt$, where $|S|$ denotes the overall area and $\Delta_p$ controls the variation from average weight. 

For the discrete problems, we adopt a similar approach proposed in \cite{Wang2017}. $S$ is divided into $s$ spatial grid cells and a uniform random point in each cell is used to represent a user location. $\mathcal{T}$ is divided into 10 equal intervals. The weight of each discrete user location is aggregated within each cell for each interval at this random location, while the MSLT value is determined as the average MSLT value inside the cell. Since the original problem is non-convex, we use a non-convex commercial solver, BARON, available in NEOS Server \cite{Czyzyk1998} to solve discrete problems. The maximum CPU time is set to 4 hours and all other solver parameters remain as default. We also set the same time limit for the LDA, since the DC programming step of the algorithm can require an excessive solution time for some instances. 

Both the LDA and CA algorithms are coded in Python and a preliminary test has been applied for fine tuning of the parameters used in the CA algorithm. Nine different instances have been generated where $\overline{w}=0.5$, $\overline{d}=105$, and $\Delta_w=\Delta_d=\Delta_p=\Delta_t=0.2$ for all different trends of the weight and demand functions. We have drawn $l$, $\mathcal{P}$, $\rho$ and $\varrho$ values from $l\in\{50,100,150\}$, $\mathcal{P}\in\{0.1,0.3,0.5,0.7,0.9\}$, $\rho\in\{10,20,30\}$ and $\varrho\in\{0.1,0.2,0.3,0.4,0.5,0.6,0.7,0.8,0.9\}$, and solved all instances by all possible parameter combinations (405 replications in total). 

We have observed that $l$, $\rho$, and $\varrho$ are dominant parameters in terms of CPU times as these parameters are dominant in determining the step size to relocate the UAV-BS. The replications where $l$ and $\rho$ take the minimum and $\varrho$ takes the maximum among their alternative values have the longest CPU times and vice versa. However, the improvement in the objective function value is not as precise as it does in CPU time. The average improvement in the objective function value is 0.91\%, while the CPU time is almost four times longer than the average CPU time of all replications. This result is based on the fact that although decreasing the step size with smaller rates may prevent aggressive relocation of the UAV-BS, the algorithm eventually converges to similar solutions. On the other hand, higher $\mathcal{P}$ values yield a 0.84\% improvement on average in the objective function values with no additional CPU time. Therefore, we opt to set $l$, $\rho$ and $\varrho$ parameters to their average values and $\mathcal{P}$ to the highest value, i.e. $l=100$, $\rho=20$, $\varrho=0.5$, $\mathcal{P}=0.9$, for reasonable CPU time and for the sake of better precision.

\subsection{Simulation results}
The LDA and CA algorithms are run with an Intel i-5@3.20 GHz processor and 8 GB RAM under Windows 10 operating system. Both the LDA and CA algorithms are set to run for either 1000 iterations or 4 hours at most. Instances are generated according to different temporal behavior of users, where only a single function differs for each parameter at a time. As a result, we analyze 9 different scenarios where each parameter takes increasing, decreasing, and random values in time. 10 replications are generated for each scenario with $\overline{w}=0.5$, $\overline{d}=105$, and $\Delta_w=\Delta_d=\Delta_p=\Delta_t=0.2$. The optimality gap and CPU times for BARON, LDA, and CA algorithms are presented in Table~\ref{tab:res1} for different $s$ values. 

Since the CA algorithm includes random parameters, all scenarios are replicated 10 times and the average and maximum results obtained from all replications are presented. We use the maximum of the best objective function values found by BARON and the maximum of lower bounds found throughout the LDA as the final lower bound, while the minimum of upper bound values found by BARON and LDA as the final upper bound while calculating the optimality gaps of the instances.

{\renewcommand{\arraystretch}{0.6}
\begin{table}[!t]
\centering
\caption{Simulation results.}
\small
\begin{tabular}{l l r r r r r r r r r r r r r r r r r}
\toprule
&& \multicolumn{17}{c}{$d$} \\ \cline{3-19}
&& \multicolumn{5}{c}{Increase} & & \multicolumn{5}{c}{Decrease} & & \multicolumn{5}{c}{Random} \\ \cline{3-7} \cline{9-13} \cline{15-19}
&& \multicolumn{2}{c}{GAP(\%)} & & \multicolumn{2}{c}{CPU(s)} & & \multicolumn{2}{c}{GAP(\%)} & & \multicolumn{2}{c}{CPU(s)} & & \multicolumn{2}{c}{GAP(\%)} & & \multicolumn{2}{c}{CPU(s)} \\ \cline{3-4} \cline{6-7} \cline{9-10} \cline{12-13} \cline{15-16} \cline{18-19}
\multicolumn{1}{c}{$w$} & \multicolumn{1}{c}{$s$} & \multicolumn{1}{c}{BRN} & \multicolumn{1}{c}{LDA} & & \multicolumn{1}{c}{BRN} & \multicolumn{1}{c}{LDA} & &
\multicolumn{1}{c}{BRN} & \multicolumn{1}{c}{LDA} & &
\multicolumn{1}{c}{BRN} & \multicolumn{1}{c}{LDA} & &
\multicolumn{1}{c}{BRN} & \multicolumn{1}{c}{LDA} & & \multicolumn{1}{c}{BRN} & \multicolumn{1}{c}{LDA} \\
\midrule
\multicolumn{19}{l}{Increase} \\
 & 20 & - & - &  & 4335 & 7637 &  & - & - &  & 5386 & 7695 &  & - & 1.34 &  & 5698 & TL \\
 & 50 & 5.75 & 3.59 &  & TL & TL &  & 2.33 & 3.25 &  & TL & TL &  & 4.82 & 3.65 &  & TL & TL \\
 & 100 & 4.38 & 7.17 &  & TL & TL &  & 4.55 & 4.76 &  & TL & TL &  & 5.17 & 4.29 &  & TL & TL \\
 & CA & \multicolumn{2}{r}{(7.18, 8.09)} & & \multicolumn{2}{r}{(66, 78)} & & \multicolumn{2}{r}{(5.10, 6.72)} & & \multicolumn{2}{r}{(41, 65)} & & \multicolumn{2}{r}{(8.27, 8.89)} & & \multicolumn{2}{r}{(58, 74)} \\
\multicolumn{19}{l}{Decrease} \\
 & 20 & - & - &  & 6181 & 7473 &  & - & 1.61 &  & 4913 & TL &  & - & 1.98 &  & 4483 & TL \\
 & 50 & 5.90 & 2.18 &  & TL & TL &  & 5.06 & 4.11 &  & TL & TL &  & 4.97 & 3.98 &  & TL & TL \\
 & 100 & 7.99 & 5.12 &  & TL & TL &  & 7.07 & 6.95 &  & TL & TL &  & 4.52 & 7.04 &  & TL & TL \\
 & CA & \multicolumn{2}{r}{(8.20, 9.27)} & & \multicolumn{2}{r}{ (61, 68)} & & \multicolumn{2}{r}{(7.61, 8.12)} & & \multicolumn{2}{r}{(29, 67)} & & \multicolumn{2}{r}{(9.85, 10.58)} & & \multicolumn{2}{r}{(81, 91)} \\
\multicolumn{19}{l}{Random} \\
 & 20 & - & 1.26 &  & 4207 & TL &  & - & 1.00 &  & 5566 & TL &  & 1.63 & 1.72 &  & TL & TL \\
 & 50 & 3.14 & 2.27 &  & TL & TL &  & 5.69 & 5.43 &  & TL & TL &  & 4.21 & 4.94 &  & TL & TL \\
 & 100 & 4.86 & 4.89 &  & TL & TL &  & 7.75 & 7.14 &  & TL & TL &  & 4.89 & 7.12 &  & TL & TL \\
 & CA & \multicolumn{2}{r}{(8.87, 9.56)} & & \multicolumn{2}{r}{ (35, 61)} & & \multicolumn{2}{r}{(9.18, 10.30)} & & \multicolumn{2}{r}{(51, 61)} & & \multicolumn{2}{r}{(11.15, 12.71)} & & \multicolumn{2}{r}{(60, 82)} \\
\bottomrule
\multicolumn{19}{l}{- BRN: BARON, TL: Time Limit.} \\
\multicolumn{19}{l}{- The gap and CPU values in CA rows represent the average and maximum values within 10 replications.} \\
\bottomrule
\end{tabular}
\label{tab:res1}
\end{table}
}

Table~\ref{tab:res1} shows that BARON and LDA performances are within close proximity to each other when the number of users is less, while both of them suffer from an increase in the number of users and fail to find optimal solutions. BARON finds the optimal solutions for 8 instances with 20 users, whereas the LDA finds the optimal solutions to 3 of the same 8 instances. The average optimality gap of the LDA for the remaining 5 instances is 0.90\%. The average optimality gap of BARON for the remaining 19 instances where the optimal is not found is 4.98\%, while the LDA results in an average gap of 4.71\% for the same instances. We can conclude that the performance of the LDA increases with increasing grid numbers when compared to BARON. The CA algorithm outperforms both BARON and LDA in terms of CPU times by solving almost all instances less than a minute. The average of maximum optimality gaps of 10 replications is 9.36\%, while the average optimality gap is 8.38\% for the CA algorithm. 

What is surprising in Table~\ref{tab:res1} is that the CA algorithm performance worsens for instances where the weight and the MSLT densities follow a random behavior in time. The optimality gap values of instances in which at least one parameter is randomly determined in time is 2.44\% worse than the instances where both parameters have monotone behavior. This outcome can be explained as follows: in instances with monotone trends in parameters, the UAV-BS is expected to move less compared to random instances. For example, if the MSLT at a location at the beginning is relatively smaller, it follows the same trend until the end of the horizon when the temporal variability is monotone in time. Therefore, optimal service areas are expected to be around the same locations in different time intervals. On the other hand, for instances with random behavior of parameters, there is no obvious trend as we have under the monotone scenarios. Therefore, the optimal locations are likely to vary and the total movement is expected to be higher. As we relax the relocation penalty in the CA model, the CA algorithm is likely to suffer from this relaxation and results with sub-optimal solutions.

The CPU performance of the CA algorithm motivates us to use this algorithm as an initial solution generator for the exact solution procedures. To see how it may improve the solution accuracy, we resolve each instance with BARON, where the best solution found by the CA algorithm is provided as an initial solution. Table~\ref{tab:res3} shows the improvement in the objective function values after running BARON for 4 hours. Although none of the instances are solved to optimality, the CA solutions enable BARON to improve its performance by 25.78\% on the average. This finding shows that the CA algorithm can be an efficient pre-processing tool for large-scale problems.

{\renewcommand{\arraystretch}{0.8}
\begin{table}[!t]
\centering
\caption{New optimality gap values of BARON solutions after providing CA output as initial solution.}
\begin{tabular}{l l r r r r r r r r r r}
\toprule
&& \multicolumn{8}{c}{$s$}  \\ \cline{3-10}
&& \multicolumn{2}{c}{20} & & \multicolumn{2}{c}{50} & & \multicolumn{2}{c}{100} \\ \cline{3-4} \cline{6-7} \cline{9-10}
\multicolumn{1}{c}{$w$} & \multicolumn{1}{c}{$d$} & \multicolumn{1}{c}{GAP} & \multicolumn{1}{c}{Improvement} & & \multicolumn{1}{c}{GAP} & \multicolumn{1}{c}{Improvement} & & \multicolumn{1}{c}{GAP} & \multicolumn{1}{c}{Improvement} \\
\midrule
Increase & Increase & - & - &  & 4.23 & 26.35 &  & 1.89 & 56.88 \\
 & Decrease & - & - &  & 2.97 & 49.66 &  & 5.47 & 31.53 \\
 & Random & - & - &  & 2.24 & 28.75 &  & 3.78 & 22.12 \\
Decrease & Increase & - & - &  & 2.10 & 9.83 &  & 3.05 & 32.93 \\
 & Decrease & - & - &  & 3.88 & 23.34 &  & 5.68 & 19.69 \\
 & Random & - & - &  & 3.67 & 35.63 &  & 5.17 & 33.32 \\
Random & Increase & - & - &  & 2.70 & 44.00 &  & 2.20 & 57.44 \\
 & Decrease & - & - &  & 2.36 & 52.52 &  & 2.30 & 49.08 \\
 & Random & 1.21 & 25.78 &  & 2.95 & 29.87 &  & 1.98 & 59.52 \\
\bottomrule
\multicolumn{10}{l}{\textit{Note}: All values are given in percentage.} \\
\bottomrule
\end{tabular}
\label{tab:res3}
\end{table}
}

As mentioned earlier, the LDA performance highly depends on the duality gap observed through the iterations. To show how this gap improves in different instances, the gap values are depicted in Figure~\ref{fig:gapLDA}. The instance names are denoted above each plot by concatenating abbreviated trends of weight and demand parameters, respectively, e.g. inc\_inc denotes the instance in which both $w$ and $d$ increases over time.

\begin{figure}[!t]
    \centering
    \includegraphics[width=\linewidth]{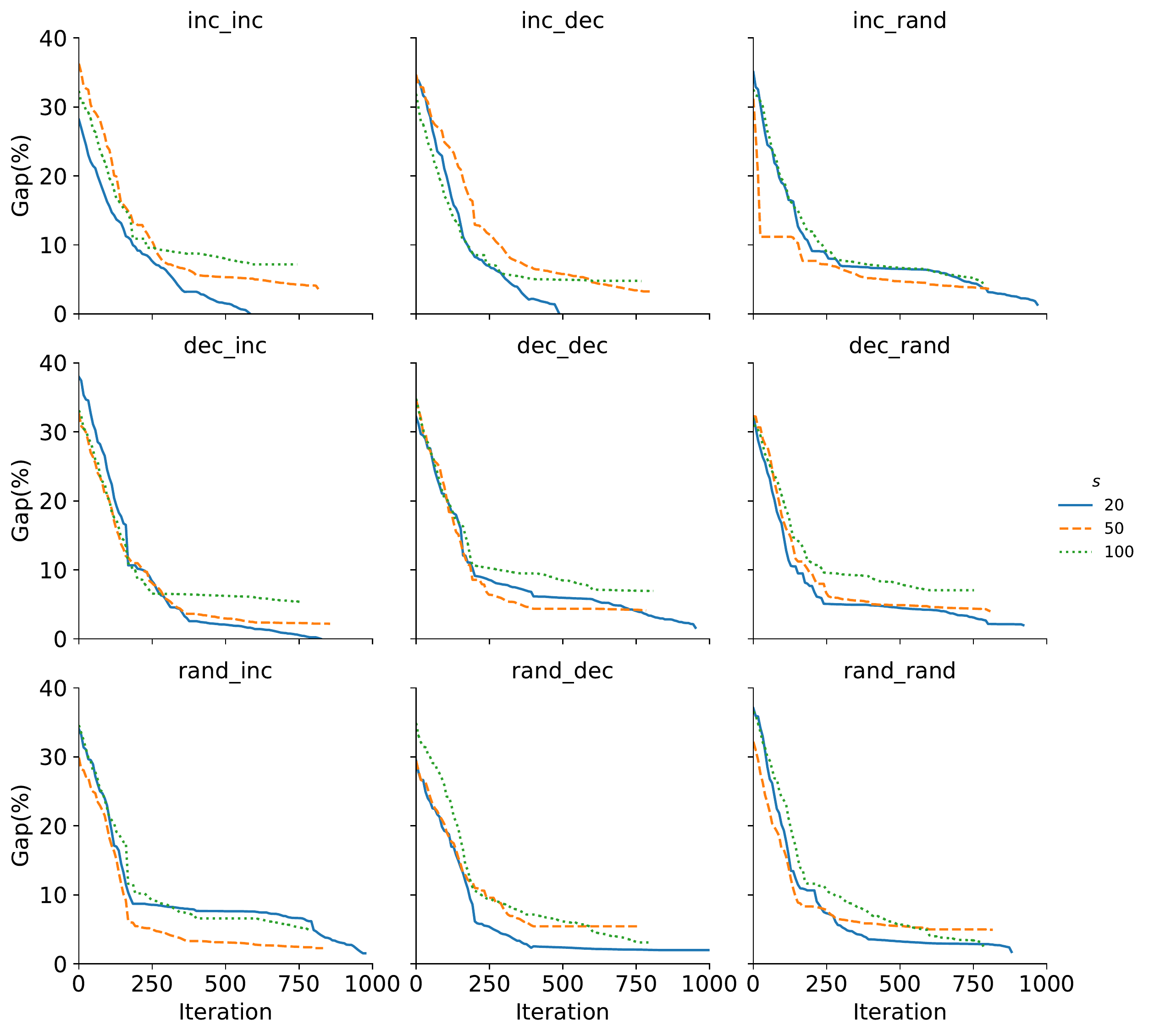}
    \caption{Change in the gap values during LDA iterations.}
    \label{fig:gapLDA}
\end{figure}

Figure~\ref{fig:gapLDA} demonstrates how smooth the gap values change through the LDA. The gap values improve after almost each iteration rather than step-wise improvements with long step sizes. As expected, the improvement rate is higher during the first iterations, and then decreases towards the termination of the algorithm. The average initial gap value of all instances is 33.18\%, and decreases to 9.34\%, 5.58\%, 4.54\%, and 3.65\% after 25, 50, 75, and 100 iterations, respectively. On average, the LDA terminates after 106, 101, and 97 iterations with the average gap values of 0.99\%, 3.71\%, and 5.09\% for the instances with 20, 50, and 100 grids, respectively. The average gap improvement rates, which are found as the rates of difference between initial and final gaps over the initial gap and averaged over all instances of corresponding grid numbers, are 95.05\%, 88.53\%, and 84.72\% for the instances with 20, 50, and 100 grids, respectively.

\begin{figure}[!b]
    \centering
    \includegraphics[width=\linewidth]{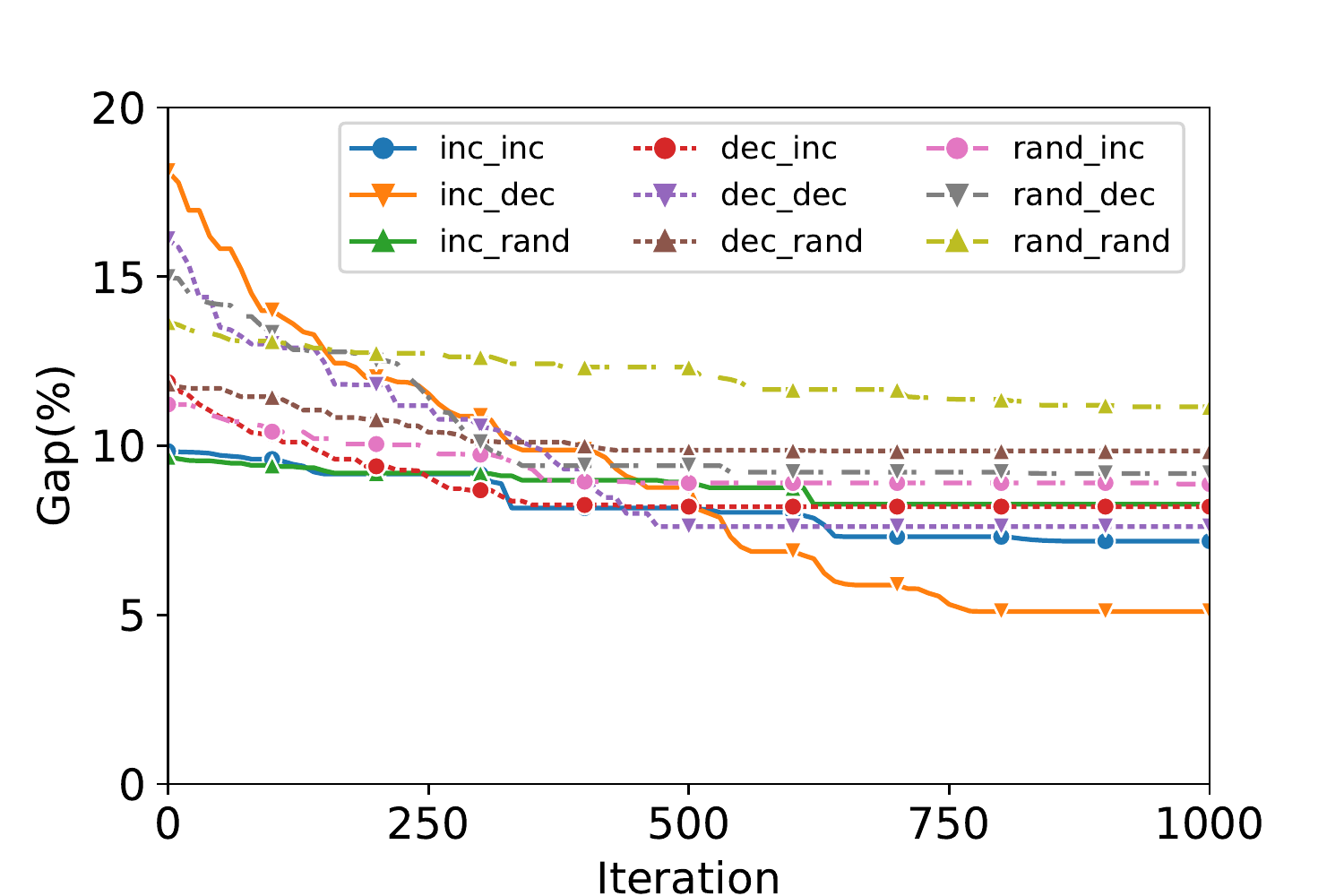}
    \caption{Change in the gap values during CA iterations.}
    \label{fig:gapCA}
\end{figure}

Figure~\ref{fig:gapCA} shows how the average optimality gap improves during the iterations of the CA algorithm. Each line in this figure illustrates the average gap values and corresponds to a specific instance as explained previously. A significant outcome from this figure is that the CA algorithm keeps improving the gap values until 755th. iteration on average. This confirms that although the convergence of CA does not seem as smooth as the LDA convergence with a higher number of step-wise decreases, smoothly decreasing step size, $\rho$, prevents the algorithm from getting stuck in sub-optimal location strategies. As a result of such improvements, the CA algorithm performs 32.44\% improvement between the initial and final gap values on average.

\begin{figure}[!b]
    \centering
    \begin{tabular}{c}
         \subfloat[]{\includegraphics[height=.35\linewidth]{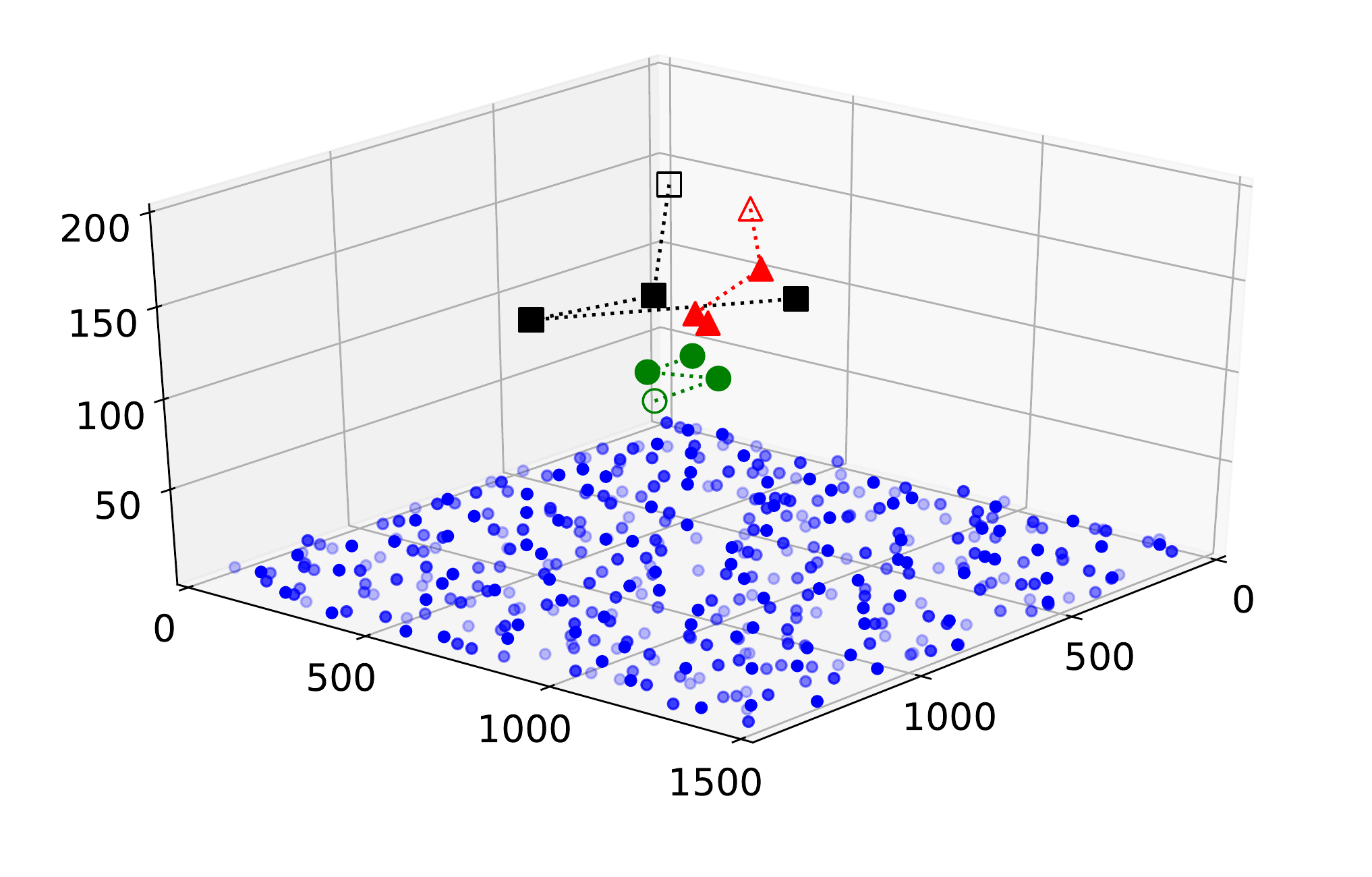}\label{fig:trajectorya}}  \\
         \subfloat[]{\includegraphics[height=.35\linewidth]{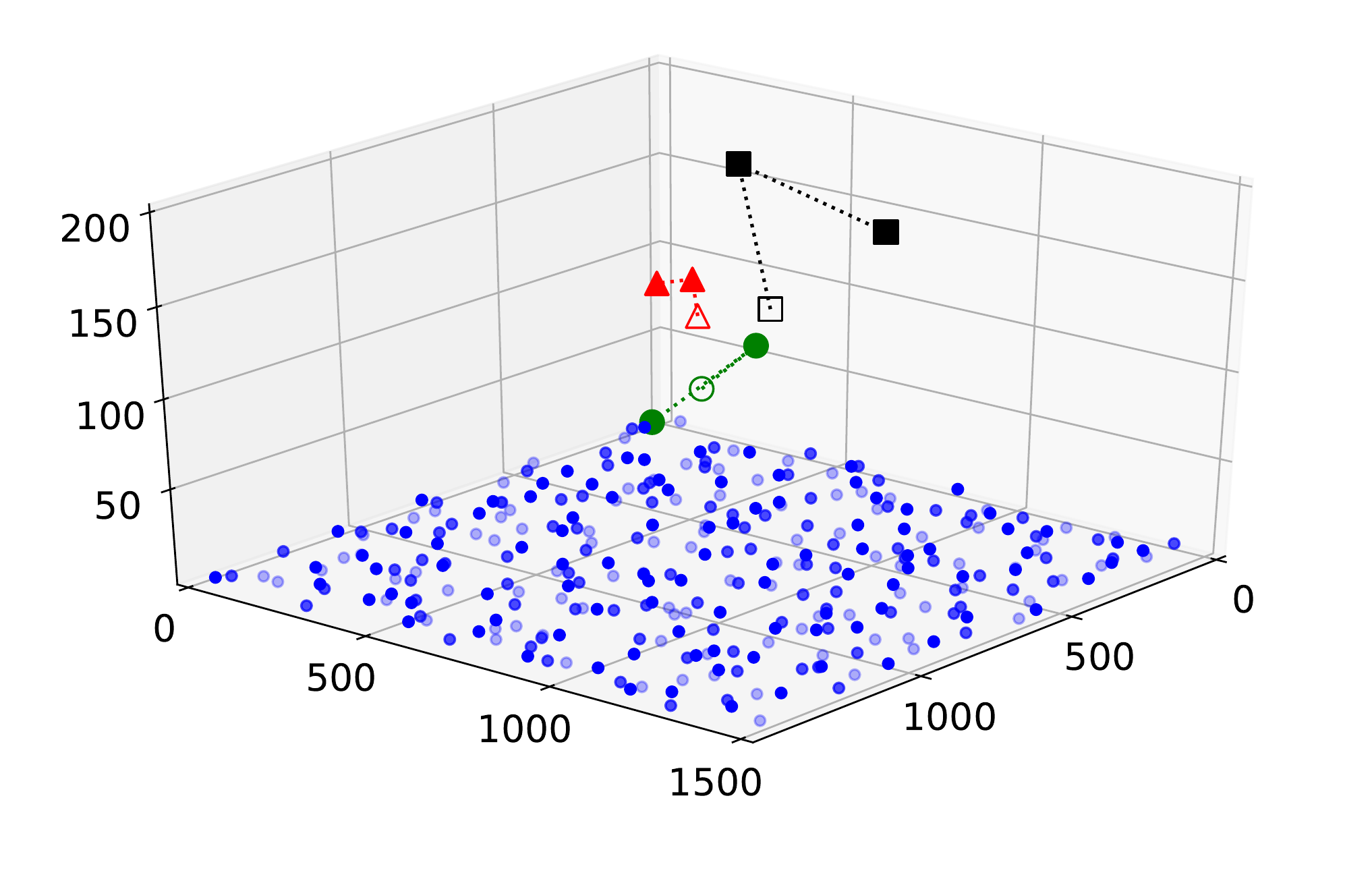}\label{fig:trajectoryb}}  \\
         \subfloat[]{\includegraphics[height=.35\linewidth]{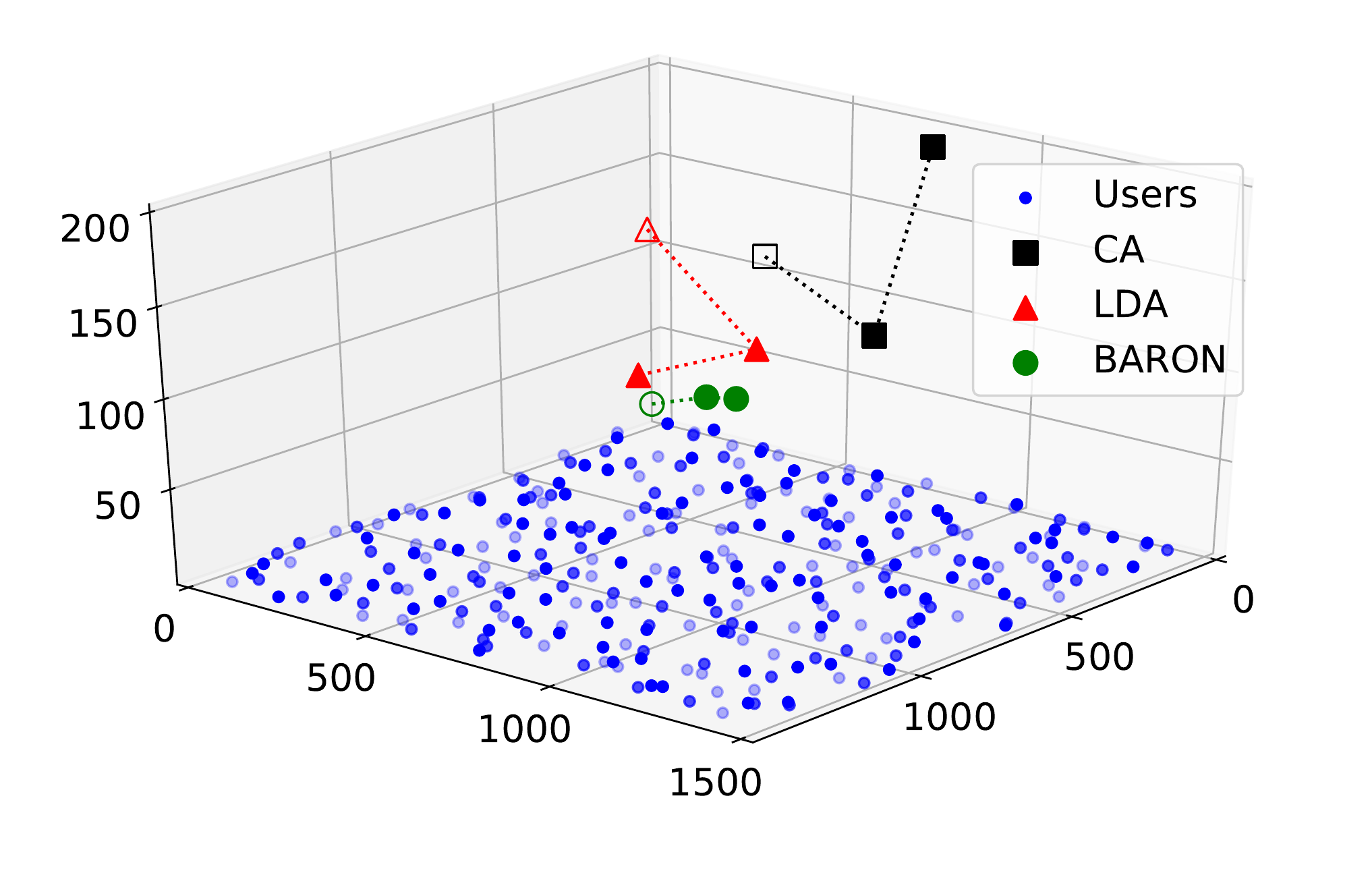}\label{fig:trajectoryc}}  
    \end{tabular}
    \caption{UAV-BS trajectories for the instance with 100 grids and decreasing weight and demand between the time intervals (a) $1-4$ (b) $5-7$ and (c) $8-10$.}
    \label{fig:trajectory}
\end{figure}

Figure~\ref{fig:trajectory} illustrates example UAV-BS trajectories found by different solution techniques. An instance with 100 grids where both $w$ and $d$ decrease is drawn as an example. The trajectories found by BARON, LDA, and CA algorithms are depicted with circle, triangle, and square markers, respectively. Figure~\ref{fig:trajectorya}, \ref{fig:trajectoryb}, and \ref{fig:trajectoryc} corresponds to the time intervals $1-4$, $5-7$, and $8-10$, respectively. Each trajectory is shown with dashed lines connecting the UAV-BS locations in succeeding intervals where empty markers denote the starting period of each sub-figure. User locations are shown with gradient colors, where the darkest markers correspond to the latest interval of the corresponding sub-figure.

Although Figure~\ref{fig:trajectory} highlights a single instance, UAV-BS locations follow similar patterns in most of the instances. BARON and LDA results seem closer while the CA results slightly differ. Since the CA algorithm attempts to improve a solution, in which the relocation is relaxed, with a greedy approach, aggressive relocation patterns are likely to be observed. On the other hand, both BARON and LDA prevent aggressive movements although the exact UAV-BS locations are different.

{
\begin{table}[!t]
\renewcommand*{\arraystretch}{0.54}
\centering
\caption{Summary of UAV-BS movement in terms of total distance and altitude changes with respect to different solution approaches.}
\begin{tabular}{
>{\small}l
>{\small}l
>{\small}r
>{\small}r
>{\small}r
>{\small}r 
>{\small}r
}
\toprule
&& \multicolumn{2}{c}{Relocation (m)} & & \multicolumn{2}{c}{Altitude Changes} \\ \cline{3-4} \cline{6-7}
{\centering $(w,d)$} & {\centering $s$} & {\centering BARON} & {\centering LDA} & & {\centering BARON} & {\centering LDA} \\
\midrule
    
inc\_inc & 20 & 2105 & 2134 & & 8.1(41.9) & 9.0(51.6)\\ 
 & 50 & 1578 & 1323 & & 8.1(42.3) & 8.9(38.3) \\ 
 & 100 & 846 & 1255 & & 8.1(28.5) & 8.9(34.5) \\ 
 & CA & \multicolumn{2}{r}{\small (2674-3000)} & & \multicolumn{2}{r}{\small (9.0-9.0)(58.2-68.1)} \\ 
inc\_dec & 20 & 2696 & 2780  & & 8.1(34.5) & 8.8(42.7)\\ 
 & 50 & 1438 & 902 & & 8.2(47.1) & 8.7(42.9)\\ 
 & 100 & 849 & 762 & & 8.1(49.5) & 8.7(47.6)\\ 
 & CA & \multicolumn{2}{r}{\small (2770-3000)} & & \multicolumn{2}{r}{\small (9.0-9.0)(60.3-69.7)}  \\ 
inc\_rand & 20 & 2232 & 2601 & & 8.1(29.9) & 8.8(32.6) \\ 
 & 50 & 1396 & 1171 & & 8.4(48.1) & 8.8(43.4) \\ 
 & 100 & 819 & 984 & & 8.4(41.2) & 9.0(45.6) \\ 
 & CA & \multicolumn{2}{r}{\small (2728-3208)} & & \multicolumn{2}{r}{\small (9.0-9.0)(78.8-90.3)}  \\ 
dec\_inc & 20 & 2168 & 2171 & & 8.1(45.5) & 8.6(47.9) \\ 
 & 50 & 1557 & 1841 & & 8.1(30.1) & 8.6(33.9) \\ 
 & 100 & 908 & 965 & & 8.0(31.8) & 8.6(33.4) \\ 
 & CA & \multicolumn{2}{r}{\small (2996-4079)} & & \multicolumn{2}{r}{\small (9.0-9.0)(61.4-72.2)}  \\ 
dec\_dec & 20 & 2442 & 2784 & & 8.2(33.5) & 8.6(37.6) \\ 
 & 50 & 1256 & 959 & & 8.2(41.9) & 8.8(37.5) \\ 
 & 100 & 995 & 701 & & 8.3(48.2) & 8.7(47.4) \\ 
 & CA & \multicolumn{2}{r}{\small (2244-3470)} & & \multicolumn{2}{r}{\small (9.0-9.0)(58.8-71.0)}  \\ 
dec\_rand & 20 & 2424 & 2861 & & 8.1(25.5) & 8.7(28.3) \\ 
 & 50 & 1228 & 1134 & & 8.3(45.9) & 8.8(45.1) \\ 
 & 100 & 852 & 772 & & 8.4(42.1) & 8.7(40.1) \\ 
 & CA & \multicolumn{2}{r}{\small (2786-3588)} & & \multicolumn{2}{r}{\small (9.0-9.0)(66.7-83.2)}  \\ 
rand\_inc & 20 & 2019 & 2829 & & 8.2(48.7) & 9.0(54.9) \\ 
 & 50 & 1137 & 1112 & & 8.4(36.1) & 9.0(36.6) \\ 
 & 100 & 802 & 746 & & 8.5(38.5) & 9.0(39.8) \\ 
 & CA & \multicolumn{2}{r}{\small (2816-3083)} & & \multicolumn{2}{r}{\small (9.0-9.0)(75.5-87.1)}  \\ 
rand\_dec & 20 & 2235 & 2489 & & 8.4(35.5) & 9.0(38.1) \\ 
 & 50 & 1376 & 1205 & & 8.4(53.9) & 9.0(51.8) \\ 
 & 100 & 823 & 1946 & & 8.4(28.8) & 9.0(39.3) \\ 
 & CA & \multicolumn{2}{r}{\small (2407-2473)} & & \multicolumn{2}{r}{\small (9.0-9.0)(72.4-79.6)}  \\ 
rand\_rand & 20 & 2271 & 2891 & & 8.8(44.6) & 9.0(46.3) \\ 
 & 50 & 1283 & 1301 & & 8.6(41.5) & 9.0(42.6) \\ 
 & 100 & 871 & 1353 & & 8.6(35.9) & 9.0(43.8) \\ 
 & CA & \multicolumn{2}{r}{\small (2540-3268)} & & \multicolumn{2}{r}{\small (9.0-9.0)(80.1-93.5)}  \\ 

\bottomrule

\end{tabular}
\label{tab:relocations}
\end{table}
}

To elaborate on the advantage of allowing the UAV-BS to move vertically, we conduct another numerical analysis. Table~\ref{tab:relocations} presents the summary of UAV-BS movement of each solution method for different behavior of $w$ and $d$. The first two columns of this table summarize the solutions in terms of total relocated distance, while the last two columns present the number of changes in altitude per interval, where the numbers in parenthesis denote the amount of change. Numbers in the CA rows represent the average and maximum values within 10 replications.

It can be observed that the UAV-BS adjusts its altitude regarding the change in parameters in all methods. The average numbers of such adjustments are 8.3, 8.8, and 9, while the average change in altitudes are 39.7, 41.6, and 68.1 meters for BARON, LDA, and CA, respectively. Regarding the total relocation amounts, BARON and LDA obviously have less movement compared to the CA algorithm. The average relocated distances are 1489, 1629, and 2944 for BARON, LDA, and CA, respectively. Moreover, the results confirm that the relocation amounts significantly decrease with an increasing number of grids. Such a result indeed depends on the fact that the distance between points selected in each grid is likely to become smaller with increasing grid numbers. Hence, it is highly likely to have points whose parameter values do not differ significantly from each other. Therefore, the UAV-BS tends to be stable and follows more contiguous trajectories to avoid aggressive movements.

In addition to the findings in Table~\ref{tab:relocations} and observations based on the UAV-BS trajectories found for each instance, we evaluate how important to let a UAV-BS move in the vertical axes instead of only flying at a single altitude level. For this purpose, we extend the original problem to a new variant where we enforce the altitude level to remain the same during the entire planning horizon. We keep everything else in the model the same. Hence, a single constraint is added to the original formulation as follows: $h_1=h_2=\ldots =h_T$, where $h_t$ denotes the altitude in the interval $t\in \mathcal{T}$. In this way, we impose our model to determine the best single altitude for the same settings with no additional updates on our assumptions.

Since adding this new constraint prevents applying the DC programming in the LDA, we could compare our results with only BARON. We solve the same instances with this new formulation by using BARON with 4-hours time limit, and find out that allowing the UAV-BS to move in the vertical axis would yield 10.21\%, 9.87\% and 9.77\% average improvement in the objective function value for 20, 50, and 100 grids, respectively. This result confirms that altitude adjustment of UAV-BSs in such networks would be promising.

To see how our algorithms perform in larger instances, we generate another data set with 200 and 500 grids. As BARON is unable to provide feasible solutions to instances with more than 100 grids within 4 hours time limit, only the performances of the LDA and CA algorithms are reported. Table~\ref{tab:ldawithca} shows the change in the optimality gap in these larger instances. The table presents the initial and final optimality gap values after 4 hours reported by the LDA. The first two columns under each grid size demonstrate the gap values obtained by running the natural LDA, while the last two columns demonstrate the gap values of the LDA with the CA solution is input as an initial solution. Observe that the average gap values in instances with 200 and 500 grids drop down from 6.92\% to 5.72\% and from 8.45\% to 7.43\% within the same CPU time when the CA solution is used, respectively. These results confirm that the CA algorithm also improves the LDA performance as it does in BARON results in Table~\ref{tab:res3}.

{\renewcommand{\arraystretch}{0.8}
\begin{table}[!t]
\centering
\caption{LDA performance for large instances with CA output given as initial solution.}
\begin{tabular}{rrrrrrrrrrrrrrrrr}
\toprule
&& \multicolumn{15}{c}{$s$} \\ \cline{3-17}
&& \multicolumn{7}{c}{200} & & \multicolumn{7}{c}{500} \\ \cline{3-9}\cline{11-17}
&& \multicolumn{3}{c}{Without CA} & & \multicolumn{3}{c}{With CA} & & \multicolumn{3}{c}{Without CA} & & \multicolumn{3}{c}{With CA} \\ \cline{3-5}\cline{7-9}\cline{11-13}\cline{15-17}
\multicolumn{1}{c}{$w$} & \multicolumn{1}{c}{$d$} & \multicolumn{1}{c}{Initial} & & \multicolumn{1}{c}{Final} & & \multicolumn{1}{c}{Initial} & & \multicolumn{1}{c}{Final} & & \multicolumn{1}{c}{Initial} & & \multicolumn{1}{c}{Final} & & \multicolumn{1}{c}{Initial} & & \multicolumn{1}{c}{Final} \\
\midrule
\multicolumn{1}{l}{Increase} & \multicolumn{1}{l}{Increase} & 33.47 &  & 5.98 &  & 11.19 &  & 5.51 &  & 33.40 &  & 8.32 &  & 11.21 &  & 7.57 \\
\multicolumn{1}{l}{} & \multicolumn{1}{l}{Decrease} & 34.02 &  & 6.81 &  & 11.42 &  & 5.83 &  & 35.45 &  & 8.52 &  & 8.43 &  & 7.41 \\
\multicolumn{1}{l}{} & \multicolumn{1}{l}{Random} & 35.36 &  & 7.14 &  & 10.59 &  & 5.78 &  & 34.44 &  & 9.09 &  & 10.49 &  & 7.55 \\
\multicolumn{1}{l}{Decrease} & \multicolumn{1}{l}{Increase} & 34.10 &  & 5.85 &  & 9.62 &  & 5.37 &  & 35.82 &  & 8.60 &  & 11.11 &  & 6.64 \\
\multicolumn{1}{l}{} & \multicolumn{1}{l}{Decrease} & 31.46 &  & 7.61 &  & 8.87 &  & 6.49 &  & 32.67 &  & 8.23 &  & 11.93 &  & 7.68 \\
\multicolumn{1}{l}{} & \multicolumn{1}{l}{Random} & 27.90 &  & 8.02 &  & 8.95 &  & 5.67 &  & 31.90 &  & 8.66 &  & 11.86 &  & 7.71 \\
\multicolumn{1}{l}{Random} & \multicolumn{1}{l}{Increase} & 26.25 &  & 5.65 &  & 9.19 &  & 5.44 &  & 36.78 &  & 7.88 &  & 10.09 &  & 7.10 \\
\multicolumn{1}{l}{} & \multicolumn{1}{l}{Decrease} & 33.09 &  & 6.98 &  & 10.96 &  & 5.67 &  & 37.16 &  & 7.92 &  & 11.30 &  & 7.63 \\
\multicolumn{1}{l}{} & \multicolumn{1}{l}{Random} & 28.88 &  & 8.22 &  & 9.32 &  & 5.74 &  & 32.13 &  & 8.85 &  & 9.93 &  & 7.62 \\
\midrule
\multicolumn{17}{l}{\textbf{Note: }All values are given in percentage.} \\
\bottomrule
\end{tabular}
\label{tab:ldawithca}
\end{table}
}

To see how heterogeneity affects the CA algorithm's performance, we generate additional series of instances, where $\Delta_w$, $\Delta_d$, $\Delta_t$, and $\Delta_p$ are drawn from $\{0,0.2,0.4,0.6,0.8\}$. We keep the same $\overline{w}$ and $\overline{d}$ values and change only one of the heterogeneity parameters at a time while fixing all other parameters to 0. Normalized average objective function values with respect to the objective function value of the homogeneous case where all heterogeneity parameters are set to 0 are illustrated in Fig.~\ref{fig:heterogeneity}. 

Fig.~\ref{fig:heterogeneity} shows that $\Delta_d$ and $\Delta_p$ have the highest and lowest impacts, respectively. Since the parameters are assumed to vary slowly in the CA model, frequent UAV-BS movement is not an expected policy, thus, the impact of $\Delta_p$ on objective function seems negligible. On the other hand, MSLT density is an important factor to determine the service area as shown in Eq.~\eqref{eqn:homogen2}, thus, high heterogeneity in $d$ has a significant impact on the location decision.

\begin{figure}[!t]
\centering
    \resizebox{.8\textwidth}{!}{\includegraphics{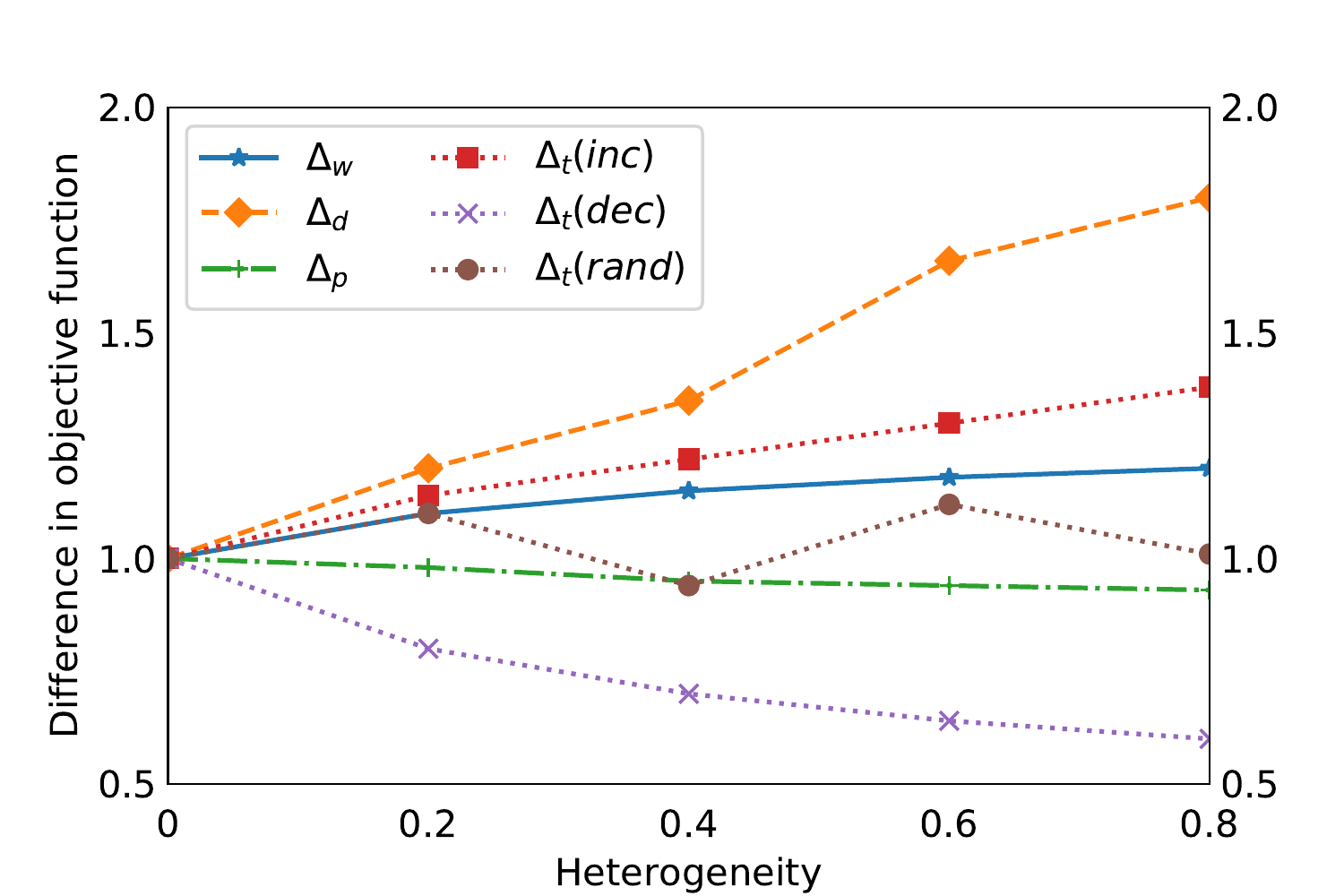}}
\caption{Impact of heterogeneity in parameters.}
\label{fig:heterogeneity}
\end{figure}

\section{Conclusions and Future Work}\label{sec:conclusion}
UAVs have been increasingly utilized in the last decade for various purposes in different industries. In the near future, their adoption and usage are expected to expand significantly and new problem versions will, most likely, need to be explored. In this study, we defined a 3-D maximal covering location problem to use UAVs as base stations in the next-generation wireless communication networks. In particular, the dynamic location of a single UAV-BS which is assumed to have infinite backhaul capacity to serve multiple ground users who are assumed to move inside a finite region within a finite time horizon is determined. We incorporated the vertical dimension into the classical 2-D maximal covering location problems and relaxed the binary coverage assumption to extend our model to more realistic cases. The problem formulation appeared to be a non-convex MINLP. Two distinct solution methods are developed to solve this problem. The custom-designed LDA efficiently determines the almost optimal solutions for medium-sized instances and finds tight bounds for larger instances. The CA algorithm, on the other hand, has the potential to be a valuable tool due to its computational performance by finding efficient solutions in reasonable times. It can also be used as an efficient heuristic to find good initial solutions for exact solution algorithms.

This study can be further extended in several directions. First, it would be interesting to address the set covering equivalent of this problem, where the objective is to determine the optimal location policies for multiple UAV-BSs given that users are covered with a minimum service quality threshold. Another possible research direction is to consider a scenario under which the service is required to satisfy certain connectivity conditions with no interruption during a specific period. This scenario brings several new constraints that tie consecutive intervals, thus, it is expected to increase complexity substantially. Another important open area is to consider disruption scenarios in which the serving time of a UAV is affected due to several factors such as weather conditions. These scenarios require a robust approach to design the network, thus, stochastic measures should be considered. A promising area would be studying learning techniques such as reinforcement learning to capture the dynamic structure of user locations and demand to decrease the prediction errors.

\end{document}